\tikzstyle{sm-vert}=[fill=black, draw = black, shape = circle, inner sep = 2.5pt]
\tikzstyle{dual-vert}=[fill=white, draw = blue, very thick, shape = circle, inner sep = 2.5pt]
\tikzstyle{negative}=[-, draw=red, very thick, dashed]
\tikzstyle{positive}=[-, draw = black, very thick]
\tikzstyle{high-pos}=[-, draw = black, very thick, preaction={
\tikzstyle{dual-edge}=[-, draw=blue, very thick]
\tikzstyle{high-dual}=[-, draw = blue, very thick, preaction={
\tikzstyle{neg path}=[-, draw=red, draw=red, very thick,  dash pattern = on 0pt off 3pt, line cap = round]
\author{Nicholas Lacasse\footnote{Department of Mathematical Sciences, Binghamton University, State University of New York, Binghamton, NY 13902-6000. Email: lacasse@math.binghamton.edu} }
\title{Minimal and Disjoint Negation Sets in Signed Graphs}
\newtheorem{thm}{Theorem}
\newtheorem{prop}[thm]{Proposition}
\newtheorem{lemma}[thm]{Lemma}
\newtheorem{cor}[thm]{Corollary}
\begin{document}

\maketitle

\noindent
\textbf{Abstract.} A signed graph is a graph with a function that assigns a label of positive or negative to each edge. The sign of a circle is the product of the signs of its edges; a graph is balanced if all of its circles are positive. A set of edges whose negation yields a balanced graph is a negation set. Results: tests to determine whether a negation set is minimal, minimum, or the unique minimum; any two disjoint negation sets must be bipartite; two classes of graphs are shown to have bipartite negation sets (in general, existence is an unsolved problem); I give an algorithm which finds a maximum family of disjoint negation sets that includes a given negation set.

\noindent
\textbf{Keywords.} Signed graph, minimal negation set, negation set packing.\\

\noindent
\textbf{2020 Mathematics Subject Classification:} Primary: 05C22; Secondary: 05C70, 05C85\\ 

\section{Introduction}

A \emph{signed graph} $\Sigma$ is a pair $(\Gamma, \sigma)$ where $\Gamma$ is a graph and $\sigma$ is a function from $E(\Gamma)$ to $\lbrace +, - \rbrace$. The \emph{sign} of a path is the product of the signs of its edges. Particularly important are the signs of closed paths, which are called \emph{circles}. A signed graph is \emph{balanced} if all of its circles are positively signed. Balance is a fundamental property of signed graphs. A useful metric of proximity to balance is the \emph{frustration index}, the minimum number of edges whose negation results in a balanced graph. Any set of edges whose negation yields a balanced graph is called a \emph{negation set}. To \emph{switch} a signed graph by a subset of its vertices $X \subseteq V(\Sigma)$ means to negate every edge with exactly one end in $X$. The collection of graphs to which $\Sigma$ can be switched is called the \emph{switching class} of $\Sigma$. 

The graphs in this paper are simple and connected. A \emph{Harary bipartition} of a signed graph $\Sigma$ is a bipartition of its vertex set, $X \cup Y = V(\Sigma)$, such that the negative edges of $\Sigma$ are precisely the edges between $X$ and $Y$. A signed graph is balanced if and only if it has a Harary bipartition \cite{Harary}. A path is \emph{fully negative} if every edge in the path is negative, similarly for \emph{fully positive}. The maximum degree of a graph $G$ is denoted by $\Delta G$. The degree, positive degree, and negative degree of a vertex $v$ are denoted by $d(v)$, $d^{+}(v)$, and $d^{-}(v)$ respectively. The graph obtained from $\Sigma$ by switching a vertex $v$ is denoted by $\Sigma^v$. The graph obtained by switching a set of vertices $X \subseteq V(\Sigma)$ is denoted by $\Sigma^X$. A signed graph $\Sigma$ with a subset of its edges $Y \subseteq E(\Sigma)$ negated is denoted by $\Sigma_Y$. A signed graph is \emph{antibalanced} if it can be switched so every edge is negative, i.e., if $E(\Sigma)$ is a negation set of $\Sigma$. This follows from a well-known proposition.

\begin{prop}\label{negation-negative}
The negation sets of a signed graph are precisely the negative edge sets that can be obtained by switching the graph.
\end{prop}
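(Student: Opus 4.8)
The plan is to prove the two inclusions separately. Write $E^{-}(\Sigma)$ for the set of negative edges of $\Sigma$, so that the assertion to be proved is $\{Y \subseteq E(\Sigma) : \Sigma_Y \text{ is balanced}\} = \{E^{-}(\Sigma^{X}) : X \subseteq V(\Sigma)\}$. Two elementary facts will do the work. First, switching by a vertex set and negating an edge set commute: the sign of an edge $e$ in $(\Sigma_Y)^{X}$ is $\varepsilon_X(e)\,\varepsilon_Y(e)\,\sigma(e)$, where $\varepsilon_X(e) = -$ exactly when $e$ has one end in $X$ and $\varepsilon_Y(e) = -$ exactly when $e \in Y$; since this product is independent of the order in which the two corrections are applied, $(\Sigma_Y)^{X} = (\Sigma^{X})_Y$. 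Second, balance is a switching invariant — switching by $X$ multiplies the sign of a circle $C$ by $(-1)^{k}$, where $k$ is the number of edges of $C$ with exactly one end in $X$, and $k$ is always even — so $\Sigma_Y$ is balanced whenever any of its switchings is balanced. I will also use the characterization quoted above, that balance is equivalent to possessing a Harary bipartition, together with the triviality that the all-positive graph on $\Gamma$ is balanced.

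For the inclusion $\supseteq$, suppose $Y = E^{-}(\Sigma^{X})$ for some $X \subseteq V(\Sigma)$. Negating precisely the negative edges of $\Sigma^{X}$ leaves every edge positive, so $(\Sigma^{X})_Y$ is the all-positive signed graph on $\Gamma$, which is balanced. By commutativity $(\Sigma_Y)^{X} = (\Sigma^{X})_Y$ is balanced, and by the switching-invariance of balance, $\Sigma_Y$ is balanced; hence $Y$ is a negation set.

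For the inclusion $\subseteq$, suppose $Y$ is a negation set, so $\Sigma_Y$ is balanced. By Harary's characterization $\Sigma_Y$ has a Harary bipartition $V(\Sigma) = X \cup Z$, and switching $\Sigma_Y$ by $X$ negates exactly the edges between $X$ and $Z$, which are exactly the negative edges of $\Sigma_Y$; thus $(\Sigma_Y)^{X}$ is all-positive. By commutativity $(\Sigma^{X})_Y$ is all-positive as well, and since negating an edge toggles its sign, this means an edge lies in $Y$ precisely when it is negative in $\Sigma^{X}$. Hence $Y = E^{-}(\Sigma^{X})$, exhibiting $Y$ as a negative edge set obtained by switching.

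The whole argument is bookkeeping with the definitions of $\Sigma^{X}$ and $\Sigma_Y$; the only step worth flagging is the switching-invariance of balance, which I would either cite as standard or justify with the one-line parity count indicated above. (As a sanity check, combining the two inclusions immediately yields the remark made just before the statement: $\Sigma$ is antibalanced exactly when $E(\Sigma)$ is a negation set.)
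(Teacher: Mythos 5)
Your proposal is correct and complete. Note, though, that the paper itself offers no proof of Proposition \ref{negation-negative}: it is stated as a well-known fact (it is essentially due to Harary and Zaslavsky), so there is no in-paper argument to compare against. Your route is a natural one: the commutation identity $(\Sigma_Y)^X = (\Sigma^X)_Y$, which you justify correctly by observing that both operations act on each edge's sign by an edge-local factor; the switching-invariance of balance, justified by the even-crossing parity count (a circle meets any cut in an even number of edges); and Harary's bipartition theorem to produce the switching set $X$ in the forward direction. Both inclusions are argued soundly, and the final bookkeeping step --- that $(\Sigma^X)_Y$ all-positive forces $Y = E^-(\Sigma^X)$ --- is exactly right. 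The only stylistic remark is that the two directions are mirror images: each says that $\Sigma_Y$ is balanced iff some (equivalently, any) switching of it is all-positive after negating $Y$, so one could phrase the whole argument as the single chain of equivalences ``$Y$ is a negation set $\iff$ $\Sigma_Y$ is balanced $\iff$ $(\Sigma_Y)^X$ is all-positive for some $X$ $\iff$ $(\Sigma^X)_Y$ is all-positive for some $X$ $\iff$ $Y = E^-(\Sigma^X)$ for some $X$,'' but this is a matter of presentation, not of substance.
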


For a graph $G$, the $k$-core of $G$ is the graph that is obtained by repeatedly deleting vertices of degree less than $k$. For $X \subseteq V(G)$, the cut determined by $X$ is denoted by $c(X, X^C)$ or just $c(X)$. The symmetric difference of two sets, $S$ and $T$, is denoted by $S \triangle T$. A set of vertices $X \subseteq V(G)$ is \emph{stable} if no edge has both its incident vertices in $X$. A \emph{stable bipartition} of a graph $G$ is a bipartition of $V(G)$ where both classes are stable. A graph has a stable bipartition if and only if it is bipartite. A stable bipartition of a graph is unique if and only if $G$ is connected. I use $[n]$ to denote the set $\lbrace i \rbrace_{i=1}^n$.

It may seem like negation sets would be well understood, especially in light of proposition \ref{negation-negative}. But many of their features are not known and do not offer obvious paths to understanding. Some features of negation sets, even once understood, are shown to be computationally difficult. I have worked to understand negation sets by studying two aspects of them: the minimal negation sets and collections of disjoint negation sets. 

In figures, positive edges are represented by solid black lines, negative edges by dashed red lines, and paths where all edges are negative are represented by red dotted lines.

\section{Minimal Negation Sets}

A negation set is \emph{minimal} if no proper subset of it is a negation set. It is \emph{minimum} if there is no negation set consisting of fewer edges. Perhaps the most fundamental question is: can we tell if a negation set is minimal? We can and the test is remarkably easy.

\begin{prop}\label{minimal-check}
Let $\Sigma$ be a signed graph with negation set $B$. The negation set $B$ is minimal if and only if $B$ does not contain a cut of $\Sigma$ (i.e., $\Sigma \setminus B$ is connected).
\end{prop}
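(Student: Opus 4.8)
The plan is to first determine exactly which edge sets are the negation sets of $\Sigma$, and then turn the minimality question into an elementary statement about cuts and connectivity.

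The key reduction is a description of all negation sets. Negate $B$ to obtain the balanced graph $\Sigma_B$. For any $C \subseteq E(\Sigma)$ one has $\Sigma_C = (\Sigma_B)_{B \triangle C}$, so $C$ is a negation set of $\Sigma$ if and only if $B \triangle C$ is a negation set of $\Sigma_B$. Since $\Sigma_B$ is balanced it switches to the all-positive signature (Harary's theorem), and by Proposition \ref{negation-negative}, together with the observation that switching the all-positive graph by a set $X$ negates precisely the edges of $c(X)$, the negation sets of $\Sigma_B$ are exactly the cuts of $\Gamma$. Consequently the negation sets of $\Sigma$ are precisely the sets $B \triangle c(X)$ with $X \subseteq V(\Sigma)$.

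Given this, the rest is bookkeeping with symmetric differences. The set $B$ fails to be minimal iff $B \triangle c(X) \subsetneq B$ for some $X$. Because $B \triangle c(X) = (B \setminus c(X)) \cup (c(X) \setminus B)$, the containment $B \triangle c(X) \subseteq B$ holds iff $c(X) \subseteq B$, and in that case $B \triangle c(X) = B \setminus c(X)$, which is a proper subset exactly when $c(X) \neq \emptyset$. Hence $B$ is minimal iff $B$ contains no nonempty cut of $\Gamma$. Finally, $B$ contains a nonempty cut iff $\Sigma \setminus B$ is disconnected: a nonempty $c(X) \subseteq B$ splits $V$ into the nonempty sets $X$ and $X^{C}$ with every crossing edge deleted; conversely a disconnection of $\Sigma \setminus B$ yields a partition $V = X \cup X^{C}$ into nonempty sets with no edge of $\Gamma \setminus B$ between them, i.e. $c(X) \subseteq B$, and $c(X) \neq \emptyset$ since $\Gamma$ is connected. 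Combining this with the previous sentence proves the equivalence.

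The only step that needs genuine care is the structural description of the negation sets; after that the argument is purely set-theoretic. There one must invoke the standard facts explicitly: balance is invariant under switching (so the negation sets depend only on the switching class of $\Sigma_B$), every balanced signature on $\Gamma$ lies in the switching class of the all-positive one, and switching by $X$ negates exactly $c(X)$. If one prefers to avoid characterizing all negation sets at once, the two implications can be argued separately: for the reverse direction one exhibits the smaller negation set $B \setminus c(X)$, noting $\Sigma_{B \setminus c(X)} = (\Sigma_B)^{X}$ is balanced; for the forward direction, from a negation set $B' \subsetneq B$ one checks that $B \setminus B'$ is a nonempty negation set of the balanced graph $\Sigma_{B'}$ and hence a cut contained in $B$. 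Either way the same two ingredients — negating a cut is a switching, and a nonempty negation set of a balanced graph is a cut — do all the work.
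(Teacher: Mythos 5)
Your proof is correct and rests on the same two ingredients as the paper's: that negating a cut is a switching (so preserves balance) and that a balanced graph's negative edges form a cut, your characterization of all negation sets as $B \triangle c(X)$ being just Proposition \ref{negation-negative} combined with Harary's theorem, and your closing two-implication variant is essentially the paper's argument verbatim. The only difference is that you spell out details the paper leaves implicit (the reduction to the balanced graph in place of the ``assume $B = E^-(\Sigma)$'' step, the exclusion of the empty cut, and the equivalence with connectivity of $\Sigma \setminus B$), which is a matter of exposition rather than a different approach.
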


\begin{proof}
Suppose $B$ is exactly the negative edge set of $\Sigma$. If $B$ contains a cut $C$, then $\Sigma_{B \setminus C}$ is balanced because its negative edges are exactly a cut. So $B \setminus C$ is a negation set. Hence $B$ is not minimal. 

Conversely, if $B$ is not minimal, then there is some $C \subseteq B$ such that $\Sigma_{B \setminus C}$ is balanced. But the negative edges of $\Sigma_{B \setminus C}$ are exactly $C$, so they must be a cut of $\Sigma$. 
\end{proof}

The simple characterization of minimality encourages us to find a test to determine whether a negation set is minimum. I have results which tell us whether a negation set is minimum (or even the unique minimum), but none of those is a total characterization.

\begin{lemma}\label{suff-lem-one}
Let $\Sigma$ be a signed graph with negation set $B$. If there exists a collection of $|B|$ edge-disjoint negative circles, then $B$ is a minimum negation set.
\end{lemma}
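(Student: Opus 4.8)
The plan is to show that every negation set of $\Sigma$ must meet every negative circle in at least one edge, and then to convert a packing of $|B|$ edge-disjoint negative circles into a matching lower bound on the size of any negation set. This is the standard packing/covering duality: the negative circles are the objects a negation set is forced to ``cover,'' and edge-disjoint copies of them force the cover to be large.

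First I would record how negation interacts with the sign of a circle. If $B' \subseteq E(\Sigma)$ and $C$ is a circle, then the sign of $C$ in $\Sigma_{B'}$ equals the sign of $C$ in $\Sigma$ times $(-1)^{|C \cap B'|}$, since precisely the edges of $C$ that lie in $B'$ get their signs flipped. Hence if $\Sigma_{B'}$ is balanced, then for every negative circle $C$ of $\Sigma$ we must have $|C \cap B'|$ odd, and in particular $|C \cap B'| \geq 1$. In other words, every negation set is a transversal (edge hitting set) of the family of negative circles of $\Sigma$.

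Next, let $C_1, \dots, C_k$ be the given pairwise edge-disjoint negative circles, where $k = |B|$, and let $B'$ be an arbitrary negation set of $\Sigma$. By the previous step, $B' \cap C_i \neq \varnothing$ for each $i$; and since the $C_i$ are pairwise edge-disjoint, the sets $B' \cap C_1, \dots, B' \cap C_k$ are pairwise disjoint nonempty subsets of $B'$. Therefore $|B'| \geq \sum_{i=1}^{k} |B' \cap C_i| \geq k = |B|$. Since $B$ is itself a negation set consisting of exactly $|B|$ edges, no negation set has fewer edges than $B$, so $B$ is minimum.

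I do not anticipate a real obstacle: the argument is a short hitting-set/packing inequality. The one place to be careful is the parity bookkeeping in the first step, which is what guarantees that a balanced sign assignment cannot avoid a negative circle entirely. It is also worth noting --- consistent with the surrounding remarks --- that this sufficient condition is not necessary: there can be minimum negation sets for which no such family of $|B|$ edge-disjoint negative circles exists, which is why the statement is a lemma rather than a characterization.
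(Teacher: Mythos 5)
Your proof is correct and follows essentially the same route as the paper's: every negation set must contain an edge of each negative circle, so $|B|$ pairwise edge-disjoint negative circles force every negation set to have at least $|B|$ edges. You simply make explicit the parity bookkeeping and the hitting-set inequality that the paper's terser proof leaves implicit.
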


\begin{proof}
Since a negation set must change the sign of every negative circle, then any negation set must be at least size $|B|$ (here it is important the circles are disjoint). But $B$ has exactly size $|B|$. So $B$ is minimum. 
\end{proof}

\begin{prop}\label{chrom-prop}
Let $\Sigma = (K_n, \sigma)$ with negation set $B$. If $n - |V(B)| \geq \chi'(B)$, then $B$ is a minimum negation set.
\end{prop}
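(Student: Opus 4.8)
The plan is to invoke Lemma \ref{suff-lem-one}: it suffices to exhibit $|B|$ pairwise edge-disjoint negative circles. First I would pass to a convenient representative of the switching class. By Proposition \ref{negation-negative}, $\Sigma$ can be switched to a signed graph $\Sigma'$ (again on $K_n$) whose negative edge set is precisely $B$; since switching preserves the frustration index and leaves $B$ a negation set, it is enough to prove that $B$ is a minimum negation set of $\Sigma'$, and $B$, $V(B)$, $\chi'(B)$ are all unchanged. So from now on assume the negative edges of $\Sigma$ are exactly $B$, in which case a circle is negative precisely when it contains an odd number of edges of $B$. (If $B = \varnothing$ the statement is trivial since $\Sigma$ is already balanced.)

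Next I would build the circles as triangles, one per edge of $B$. Fix a proper edge coloring of the graph $B$ with color classes $M_1, \dots, M_k$, where $k = \chi'(B)$, so each $M_i$ is a matching. Because $n - |V(B)| \geq k$, we may choose distinct vertices $w_1, \dots, w_k \in V(\Sigma) \setminus V(B)$. For each edge $e = uv$ lying in $M_i$, let $T_e$ be the triangle on the vertices $u, v, w_i$; this triangle exists because the underlying graph is $K_n$. Its three edges are $uv \in B$ and $uw_i, vw_i$, and the latter two are not in $B$ since $w_i \notin V(B)$. Hence $T_e$ contains exactly one edge of $B$ and so is a negative circle.

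Then I would verify edge-disjointness. Take $e = uv \in M_i$ and $f = xy \in M_j$ with $e \neq f$. The unique $B$-edge of $T_e$ is $e$ and that of $T_f$ is $f$, and these differ; every non-$B$-edge of $T_e$ is incident to $w_i$, and every non-$B$-edge of $T_f$ is incident to $w_j$. If $i \neq j$, then $w_i \neq w_j$, and since $u, v, x, y \in V(B)$ while $w_i, w_j \notin V(B)$, an edge of the form $uw_i$ can equal one of the form $xw_j$ or $yw_j$ only if $w_i = w_j$, which is impossible; so $T_e$ and $T_f$ are edge-disjoint. If $i = j$, then $u, v, x, y$ are endpoints of edges of the matching $M_i$, so $\{u,v\} \cap \{x,y\} = \varnothing$, and $uw_i = xw_i$ would force $u = x$, again impossible; so $T_e$ and $T_f$ are edge-disjoint. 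Therefore $\{T_e : e \in B\}$ is a collection of $|B|$ pairwise edge-disjoint negative circles, and Lemma \ref{suff-lem-one} yields that $B$ is a minimum negation set.

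The step requiring the most care is the disjointness check: one must see that assigning a single ``apex'' vertex to an entire color class of $B$ still produces edge-disjoint triangles. This is exactly why the chromatic index $\chi'(B)$, rather than, say, $\Delta B$, is the relevant parameter, and why the hypothesis $n - |V(B)| \geq \chi'(B)$ is precisely what the argument needs.
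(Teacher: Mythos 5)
Your proof is correct and follows essentially the same route as the paper: a proper edge coloring of $B$, one apex vertex per color class chosen from $V(\Sigma)\setminus V(B)$, triangles with exactly one edge in $B$, and an appeal to Lemma \ref{suff-lem-one}. Your explicit switching to a representative with $E^-=B$ and your more detailed edge-disjointness check only make explicit what the paper's proof leaves implicit.
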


\begin{proof}
I will show the sufficient condition of lemma \ref{suff-lem-one} is satisfied by finding a negative triangle through each edge of $B$. Properly color the edges in $B$ with the integers from 1 to $\chi'(B)$. Choose vertices $v_1, \ldots, v_{\chi'(B)}$ from $V(\Sigma) - V(B)$. This is possible because $n - |V(B)| \geq \chi'(B)$. For any edge in $B$ with color $i$, its ends form a triangle with $v_i$. All triangles that can be constructed in this way form the collection of $|B|$ triangles. See, for example, figure \ref{edge-disjoint-triangles}.

Each triangle has exactly one edge in $B$ and is therefore negative. If two of the triangles have adjacent edges in $B$, then those two edges must have different colors. So the only vertex these triangles share in common is where their two negative edges meet and thus they cannot share any edges. Since the collection of negative triangles satisfies the hypotheses of lemma \ref{suff-lem-one}, $B$ is a minimum negation set.
\end{proof}

\begin{figure}
\caption{Edge-disjoint negative triangles found by edge coloring as described in the proof of proposition \ref{chrom-prop}.}
\label{edge-disjoint-triangles}
\begin{center}
\includegraphics[scale=1]{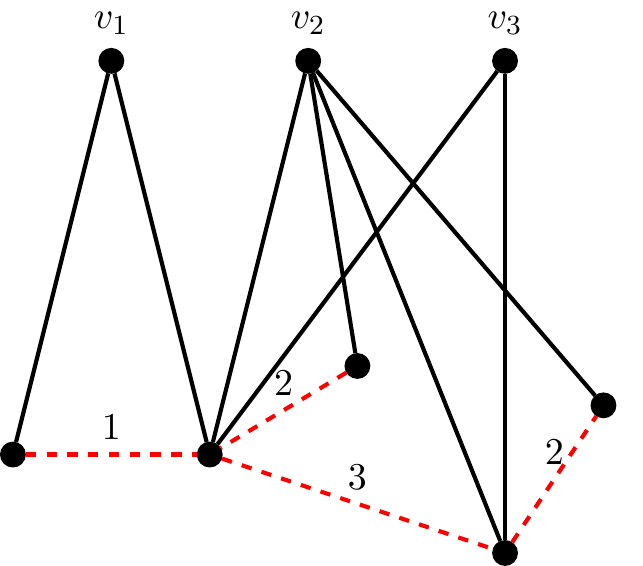}
\end{center}
\end{figure}

Propositions \ref{suff-two-neg-circles} and \ref{suff-small-B-unique} provide sufficient conditions for a negation set to be the unique minimum negation set. 

\begin{prop} \label{suff-two-neg-circles}
Let $\Sigma$ be a signed simple graph with minimum negation set $B$ as its negative edge set. If for each $e_i \in B$ there exist two negative circles $C_{i_1}, C_{i_2}$ both containing $e_i$ such that the edge sets in $\lbrace E(C_{i_1}) \cup E(C_{i_2}) \mid e_i \in B \rbrace$ are pairwise disjoint, then $B$ is the unique minimum negation set.
\end{prop}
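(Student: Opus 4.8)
The plan is to take an arbitrary minimum negation set $B'$ of $\Sigma$ and show $B' = B$. Two facts drive everything. First, every negation set meets every negative circle: a negative circle that avoids the negated edges keeps its (negative) sign, so the switched graph could not be balanced — this is the observation already used inside the proof of Lemma~\ref{suff-lem-one}. Second, the $|B|$ circles $C_{i_1}$, $e_i \in B$, lie inside the pairwise disjoint edge sets $E(C_{i_1}) \cup E(C_{i_2})$, so they are pairwise edge-disjoint; the counting in Lemma~\ref{suff-lem-one} then shows every negation set has at least $|B|$ edges, whence $|B'| = |B|$ (and, as given, $B$ is indeed minimum).

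Now apply the first fact to $B'$ together with the family $\{C_{i_1}\}_{e_i\in B}$. Since $B'$ meets each of these $|B|$ pairwise disjoint circles and $|B'| = |B|$, it must contain exactly one edge of each $E(C_{i_1})$ and no edge outside $\bigcup_{e_i\in B} E(C_{i_1})$. The identical argument applied to $\{C_{i_2}\}_{e_i\in B}$ shows $B'$ contains exactly one edge of each $E(C_{i_2})$ and lies inside $\bigcup_{e_i\in B} E(C_{i_2})$. Because the blocks $E(C_{i_1})\cup E(C_{i_2})$ are pairwise disjoint, for each $e_i$ the lone edge of $B'$ in the $i$-th block is simultaneously the unique edge of $B'\cap E(C_{i_1})$ and of $B'\cap E(C_{i_2})$; in particular it lies in $E(C_{i_1})\cap E(C_{i_2})$.

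To finish, I want $E(C_{i_1})\cap E(C_{i_2})=\{e_i\}$: two distinct negative circles through $e_i$ sharing no other edge. This is exactly where having two circles per edge provides something that the single circle per edge of Lemma~\ref{suff-lem-one} does not, and I expect it to be the delicate point of the argument — two negative circles through $e_i$ could in principle share an entire path (a theta-type configuration), in which case the edge of $B'$ in that block need not be $e_i$, so one should check that the stated disjointness condition really forces $E(C_{i_1})\cap E(C_{i_2})=\{e_i\}$. Granting that, the lone edge of $B'$ in the $i$-th block is $e_i$ itself; letting $e_i$ range over $B$ yields $B'=\{e_i : e_i\in B\}=B$, so $B$ is the unique minimum negation set. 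Everything outside that one identification is routine: the meets-every-negative-circle property and the disjoint-block cardinality bookkeeping.
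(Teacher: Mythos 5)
Your bookkeeping is the same as the paper's: the pairwise disjoint blocks $E(C_{i_1}) \cup E(C_{i_2})$ force any negation set $B'$ with $|B'| = |B|$ to contain exactly one edge per block, and that edge must reverse the sign of both $C_{i_1}$ and $C_{i_2}$, hence lie in $E(C_{i_1}) \cap E(C_{i_2})$ (your double application of the counting argument to $\lbrace C_{i_1} \rbrace$ and $\lbrace C_{i_2} \rbrace$ separately gets this by pure counting, where the paper argues via signs; a minor but clean improvement). The step you flag and then only ``grant'' --- that $E(C_{i_1}) \cap E(C_{i_2}) = \lbrace e_i \rbrace$ --- is a genuine gap in your write-up, and your suspicion is justified: the stated disjointness condition does not force it. Concretely, let $\Sigma$ be the simple graph consisting of two vertices $u,v$ joined by three internally disjoint paths $P_1, P_2, P_3$ of length $2$, with exactly one negative edge $e$, lying on $P_1$. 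Then $B = \lbrace e \rbrace$ is a minimum negation set, $C_{1_1} = P_1 \cup P_2$ and $C_{1_2} = P_1 \cup P_3$ are negative circles through $e$, and the one-member family of blocks is vacuously pairwise disjoint; yet negating the other edge of $P_1$ also balances $\Sigma$, so $B$ is not the unique minimum. Thus the identification you need cannot be derived from the hypotheses as written; it must be assumed.

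For comparison, the paper's proof runs exactly your argument but at the crucial moment simply asserts that $C_{i_1}$ and $C_{i_2}$ are ``two negative circles sharing only a single edge,'' i.e., it silently invokes the hypothesis $E(C_{i_1}) \cap E(C_{i_2}) = \lbrace e_i \rbrace$ that the statement of proposition \ref{suff-two-neg-circles} omits. With that hypothesis added (clearly the intended reading), your argument closes immediately: the lone edge of $B'$ in the $i$-th block lies in the intersection, hence equals $e_i$, and $B' = B$. So your route is essentially the paper's route, and the one step you could not justify is precisely the step the paper assumes without proof; the honest fix is to strengthen the hypothesis, not to look for a cleverer deduction from the current one.
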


\begin{proof}
Suppose the hypotheses and let $C$ be a negation set with $|C| = |B|$. For each $i$, $C$ contains an edge from $E(C_{i_1}) \cup E(C_{i_2})$. Since $E(C_{i_1}) \cup E(C_{i_2})$ is disjoint from $E(C_{j_1}) \cup E(C_{j_2})$ for $i \neq j$, then $C$ must contain exactly one edge from each $E(C_{i_1}) \cup E(C_{i_2})$. But since $E(C_{i_1}) \cup E(C_{i_2})$ corresponds to two negative circles sharing only a single edge, the only way to balance both of these negative circles is to choose the edge in $B$. Hence $C = B$.
\end{proof}

\begin{prop}\label{suff-small-B-unique}
Let $\Sigma = (K_n, \sigma)$ with negation set $B$. If $|B| \leq \frac{n}{2} - 1$ then $B$ is the unique minimum negation set of $\Sigma$.
\end{prop}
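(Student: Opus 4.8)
The plan is to sidestep the earlier sufficient conditions (in particular Proposition~\ref{suff-two-neg-circles}) and argue directly: describe every negation set explicitly as a switching of $B$, and then use the single fact that a nontrivial cut of $K_n$ is large.

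First I would normalize the hypotheses. By Proposition~\ref{negation-negative}, $B$ is the negative edge set of some switching $\Sigma^{X_0}$ of $\Sigma$. Since negating a set of edges commutes with switching and switching preserves balance, $\Sigma$ and $\Sigma^{X_0}$ have exactly the same negation sets, and in particular the same minimum negation sets; so I may replace $\Sigma$ by $\Sigma^{X_0}$ and assume that $B$ is precisely the set of negative edges of $\Sigma$. Then, again by Proposition~\ref{negation-negative}, every negation set $C$ of $\Sigma$ is the negative edge set of some $\Sigma^X$, i.e.\ $C = B \triangle c(X)$ for some $X \subseteq V(\Sigma)$.

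Next comes the counting step, which carries the whole argument. If $X = \emptyset$ or $X = V(\Sigma)$, then $c(X) = \emptyset$ and $C = B$. Otherwise $c(X)$ is a nonempty cut of $K_n$, so $|c(X)| = |X|\cdot|V(\Sigma)\setminus X| \geq n-1$. Using the hypothesis in the form $2|B| \leq n-2$, this gives $|c(X)| \geq n-1 \geq 2|B|+1$; since $c(X)\cap B \subseteq B$ we get $|c(X)\setminus B| = |c(X)| - |c(X)\cap B| \geq (2|B|+1) - |B| = |B|+1$, and therefore
\[
|C| = |B \triangle c(X)| = \bigl(|B| - |c(X)\cap B|\bigr) + |c(X)\setminus B| \geq |B| + 1 > |B|.
\]
Thus every negation set of $\Sigma$ is either $B$ itself (the cases $X = \emptyset$ and $X = V(\Sigma)$) or strictly larger than $B$; hence $B$ is the unique minimum negation set.

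I do not expect a genuine obstacle here: the cut-size estimate is elementary, and the only point needing a little care is the reduction in the second paragraph, namely verifying that passing to a switching of $\Sigma$ in which $B$ is exactly the negative edge set changes neither the family of negation sets nor which of them are minimum. It is worth noting why the more direct route is preferable: if one tried instead to force the conclusion through Proposition~\ref{suff-two-neg-circles} by exhibiting, for each $e \in B$, two negative triangles with pairwise disjoint unions, the vertex budget $n - |V(B)|$ can be too small (for instance when $B$ is a triangle and $n = 8$), so that approach does not close the gap.
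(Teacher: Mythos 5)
Your proof is correct and is essentially the paper's argument: both express the negative edge set of $\Sigma^X$ as $B \triangle c(X)$ (the paper writes it as $|c(X)| - |c(X)\cap B| + |B\setminus c(X)|$) and combine the cut bound $|c(X)| \geq n-1$ in $K_n$ with $|B| \leq \frac{n}{2}-1$ to show every nontrivially switched graph has strictly more than $|B|$ negative edges. Your explicit treatment of the trivial cuts $X = \emptyset, V(\Sigma)$ and the normalization via Proposition~\ref{negation-negative} are just slightly more careful versions of steps the paper leaves implicit.
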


\begin{proof}
Suppose the negative edges of $\Sigma$ are exactly $B$. It suffices to show every switching of $\Sigma$ has more than $\frac{n}{2}-1$ negative edges. Let $X \subseteq V(\Sigma)$. The number of negative edges in $\Sigma^X$ is the number of negative edges in $\Sigma$ not in $c(X)$ plus the number of positive edges of $\Sigma$ in $c(X)$. Thus $\Sigma^X$ has $$|c(X)| - |c(X) \cap B| + |B \setminus c(X)|$$ negative edges. Because $|c(X)| \geq n-1, |c(X) \cap B| \leq \frac{n}{2}-1$, and $|B \setminus c(X)| \geq 0$, then $$|c(X)| - |c(X) \cap B| + |B \setminus c(X)| \geq (n-1) - (\frac{n}{2} -1) + 0 = \frac{n}{2}.$$ So any switching of $\Sigma$ will necessarily have more negative edges. So $B$ is the unique minimum.
\end{proof}

\section{Disjoint Negation Sets}
For the rest of the paper, I will use the \emph{positive and negative subgraphs} of $\Sigma$ which are obtained by deleting the negative and positive edges, respectively, from $\Sigma$.  They are denoted by $\Sigma^+$ and $\Sigma^-$.  I will also use the \emph{edge-induced positive subgraph} and the \emph{edge-induced negative subgraph} which are the signed graphs induced by $E^+$ and $E^-$.  They are denoted by $\Sigma{:} E^+$ and $\Sigma{:} E^-$ respectively.

For two vertices $x$ and $y$, the \emph{positive distance} from $x$ to $y$ is the distance from $x$ to $y$ in the positive subgraph. It is denoted by $d^+(x, y)$. Given two sets of vertices, $X, Y \subseteq V(\Sigma)$, define $$d^+(X, Y) := \min_{(x, y) \in X \times Y} d^+(x, y).$$

\begin{thm}\label{disjoint-negation-sets-are-bipartite}
A negation set of a signed graph $\Sigma$ is bipartite if and only if there exists another negation set disjoint from it.
\end{thm}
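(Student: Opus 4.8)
The plan is to strip away the signed-graph language and reduce the statement to an elementary fact about cuts. The first step is to normalize: by Proposition~\ref{negation-negative}, $B$ is the negative edge set of some graph $\Sigma^X$ in the switching class of $\Sigma$, and since every circle crosses a cut an even number of times, balance is a switching-class invariant, so $\Sigma$ and $\Sigma^X$ have exactly the same negation sets. Hence I may assume from the outset that $B$ is precisely the negative edge set of $\Sigma$; this changes neither which edge sets are negation sets nor the subgraph carried by $B$.

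With this normalization, the negative edge set of $\Sigma_C$ is $B \triangle C$, and since a signed graph is balanced precisely when its negative edge set is a cut (equivalently, it has a Harary bipartition), $C$ is a negation set if and only if $B \triangle C = c(Y)$ for some $Y \subseteq V(\Sigma)$. Now specialize to sets $C$ disjoint from $B$: there $B \triangle C = B \cup C$, so such a $C$ is a negation set exactly when $B \cup C = c(Y)$ for some $Y$, i.e., exactly when $C = c(Y) \setminus B$ for some $Y$ with $B \subseteq c(Y)$. Consequently, a negation set disjoint from $B$ exists if and only if $B$ is contained in some cut $c(Y)$ of $\Sigma$.

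It then remains to observe that $B$ lies inside a cut $c(Y)$ if and only if the subgraph $(V(\Sigma), B)$ is bipartite. For one direction, $B \subseteq c(Y)$ says every edge of $B$ has exactly one end in $Y$, so $(Y, V(\Sigma) \setminus Y)$ witnesses bipartiteness of $(V(\Sigma), B)$. For the converse, take a proper $2$-coloring of each component of $(V(\Sigma), B)$, coloring the $B$-isolated vertices arbitrarily, let $Y$ be one color class, and note $B \subseteq c(Y)$. Chaining the two equivalences yields the theorem.

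I do not expect a serious obstacle; the content is entirely in assembling the right reductions. The points most in need of care are the normalization step — verifying that passing to a switching in which $B$ is the negative edge set preserves the \emph{entire} family of negation sets, so that ``there is a negation set disjoint from $B$'' is genuinely unchanged — together with the routine-but-essential fact that a subset of a cut, viewed as a subgraph, is bipartite, and the (harmless) freedom in coloring vertices not met by $B$.
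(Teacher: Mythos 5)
Your proof is correct. The forward direction is essentially the paper's: a bipartite $B$ lies inside a cut $c(Y)$, and switching the shore $Y$ (equivalently, taking $C = c(Y)\setminus B$) produces a disjoint negation set. Where you genuinely diverge is the converse. The paper argues by contradiction via circle signs: an odd circle inside $A$ would be negative in $\Sigma$ but fully positive (hence positive) in the switching $\Sigma^S$ whose negative set is the disjoint negation set, contradicting the invariance of circle signs under switching. You instead run everything through Harary's cut characterization of balance: after normalizing so that $B = E^-(\Sigma)$, the negative set of $\Sigma_C$ is $B \triangle C$, disjointness turns this into $B \cup C$, and so a disjoint negation set exists iff $B \cup C$ is a cut iff $B$ is contained in a cut iff the subgraph carried by $B$ is bipartite. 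Your route buys a little more: it describes \emph{all} negation sets disjoint from $B$ (exactly the sets $c(Y)\setminus B$ over cuts containing $B$), whereas the paper's converse is a quicker parity contradiction that uses no balance criterion beyond sign-invariance of circles. Two small remarks: your normalization deserves the explicit one-line justification that negation and switching commute, $(\Sigma^X)_D = (\Sigma_D)^X$, which is what actually shows $\Sigma$ and $\Sigma^X$ have the same negation sets (proposition \ref{negation-negative} plus ``balance is switching-invariant'' is the right idea but this is the step doing the work); and when $B$ itself is a cut your construction yields $C=\emptyset$, a degenerate ``other'' negation set --- the paper's forward direction has the same harmless edge case, so nothing is lost.
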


\begin{proof}
Suppose $\Sigma$ has bipartite negative edge set $A$. Since $A$ is bipartite, it is contained in a cut of the graph, say $c(X)$. Then the negative edges of $\Sigma^X$ are disjoint from $A$. 

Conversely, suppose $\Sigma$ has disjoint negation sets $A$ and $B$ where $A = E^-(\Sigma)$ and $B = E^-(\Sigma^S)$ for some $S \subseteq V(\Sigma)$. By way of contradiction, suppose $A$ has a circle $C$ of odd length. Then $C$ is negatively signed. Since $A$ and $B$ are disjoint, then $B$ contains no edge of $C$. So $C$ is positively signed in $\Sigma^S$. But switching preserves the sign of circles. So, I have reached a contradiction. Thus $A$ and $B$ must be bipartite.
\end{proof}

\section{Bipartite Negation Sets}

Theorem \ref{disjoint-negation-sets-are-bipartite} informs us that if we want to pack negation sets, we will want to be looking at bipartite negation sets. This provokes a natural question: how do we find bipartite negation sets? This question currently has no answer. What's more, the situation is worse! How do we know that a signed graph has a bipartite negation set? Determining this was proven to be NP-complete: see \cite[theorem 4.2]{bipartite-complexity} and \cite{4-2-thm-improvement}. In \cite{packing-homomorphism-connection}, it is shown that determining whether a signed graph has at least $i$ disjoint negation sets can be turned into a signed graph homomorphism problem. In particular, a signed graph has a bipartite negation set if and only if there is a homomorphism from it to a negative digon with a positive loop at each vertex. $\mathcal{C}$

Some progress has been made on determining classes of graphs with bipartite negation sets. It was addressed by Hage and Harju (in different terminology) in \cite{Hage-Harju-Bipartite}. In \cite{Hage-Harju-Acyclic} they characterized, by forbidden subgraphs, which signed complete graphs have an acyclic negation set. They took a similar approach in \cite{Hage-Harju-Bipartite} to determine which signed complete graphs have a bipartite negation set but they were unable to complete the proof. To my knowledge, their proof has not been finished by anyone else. I will prove that two other classes of graphs have bipartite negation sets and then explore how many negation sets can be packed in a graph. 

I would like to thank Reza Naserasr for providing the simpler proof of theorem \ref{antibal-planar} presented here. 
 
\begin{thm}\label{antibal-planar}
Every antibalanced planar graph has a bipartite negation set. 
\end{thm}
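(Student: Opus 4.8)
The plan is to exploit the characterization of antibalance together with the structure of planar graphs. Since $\Sigma$ is antibalanced, Proposition \ref{negation-negative} and the definition tell us $E(\Sigma)$ itself is a negation set, and moreover $\Sigma$ is switching-equivalent to the all-negative signing $-\Gamma$ of its underlying graph $\Gamma$. A negation set is bipartite if and only if it is bipartite in every switching (switching a negation set produces another negation set but does not change whether the set of negated edges can be taken bipartite — more precisely, by Theorem \ref{disjoint-negation-sets-are-bipartite} it suffices to exhibit \emph{some} negation set that is bipartite). So it is enough to find a bipartite negation set for $-\Gamma$ where $\Gamma$ is an arbitrary planar graph. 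Equivalently, by Proposition \ref{negation-negative}, I need a subset $X \subseteq V(\Gamma)$ such that the negative edges of $(-\Gamma)^X$ — namely the edges \emph{not} in the cut $c(X)$, i.e.\ the edges inside $X$ together with the edges inside $X^C$ — form a bipartite subgraph of $\Gamma$.

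So the combinatorial core is: \textbf{every planar graph $\Gamma$ has a vertex partition $V = X \cup X^C$ such that $\Gamma[X]$ and $\Gamma[X^C]$ are both bipartite.} This is a known result — it follows from the fact that planar graphs have vertex-partitions into two forests (a weakening of Nash-Williams/arboricity-type statements), or more directly from the theorem that the vertices of any planar graph can be partitioned into two sets each inducing a forest (Chartrand–Kronk–Wall, or via acyclic/degeneracy arguments), and a forest is in particular bipartite. The cleanest route is probably: planar graphs are $5$-degenerate, but that alone is not enough; instead invoke the theorem that planar graphs have \emph{vertex arboricity} at most $2$, i.e.\ $V(\Gamma)$ splits into two induced forests $F_1, F_2$. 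Take $X = V(F_1)$. Then both $\Gamma[X]$ and $\Gamma[X^C]$ are forests, hence bipartite, hence their union $E(\Gamma)\setminus c(X)$ is bipartite — wait, the union of two bipartite graphs need not be bipartite, so I should phrase the conclusion carefully: the set of negated edges of $(-\Gamma)^X$ is the union of $E(\Gamma[X])$ and $E(\Gamma[X^C])$, which as a \emph{graph} is a disjoint union of two forests (no edges run between $X$ and $X^C$ within that edge set), hence is itself a forest, hence bipartite. That resolves the subtlety.

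Concretely the steps are: (1) reduce to the all-negative signing $-\Gamma$ via switching-equivalence of antibalanced graphs; (2) restate "bipartite negation set of $-\Gamma$" as "a vertex set $X$ with $E(\Gamma[X]) \cup E(\Gamma[X^C])$ bipartite"; (3) apply the vertex-arboricity-$\le 2$ theorem for planar graphs to get such an $X$ with both induced subgraphs forests; (4) observe that the edge set in question is then a forest (two vertex-disjoint forests have a forest as union) and therefore bipartite, and conclude.

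The main obstacle is purely in locating and citing the right structural fact cleanly. If one does not want to quote vertex arboricity, the self-contained alternative is an Euler-formula counting argument: iteratively, since a planar graph on $n$ vertices has fewer than $3n$ edges, some vertex has degree $\le 5$; but degeneracy $5$ does not immediately give the two-forest partition, so one really does want either the Chartrand–Kronk–Wall theorem (vertex arboricity of planar graphs is at most $3$, improved to $2$ for... — actually the sharp statement that suffices here is the $2$-forest vertex partition, sometimes attributed via the acyclic coloring / Borodin circle of results) or Reza Naserasr's suggested shortcut, which presumably bypasses arboricity entirely. I would present step (1)–(2) in full detail since they are the signed-graph-specific content, and then cite the planar vertex partition into two forests as the one external input, flagging that this is exactly where planarity is used and that nothing about the argument is special to $-\Gamma$ beyond the reduction in step (1).
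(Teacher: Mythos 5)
Your reduction is fine: since $\Sigma$ is antibalanced you may assume every edge is negative, and a switching set $X$ then yields a bipartite negative edge set precisely when the edges inside $X$ together with the edges inside $X^C$ form a bipartite graph; as you correctly note, these two induced subgraphs are vertex-disjoint, so it suffices that $\Gamma[X]$ and $\Gamma[X^C]$ each be bipartite. The gap is in the external fact you invoke to produce such an $X$: it is \emph{not} true that every planar graph admits a vertex partition into two induced forests. The Chartrand--Kronk--Wall bound on the vertex (point) arboricity of planar graphs is $3$, not $2$, and Chartrand and Kronk exhibited planar graphs attaining $3$; the improvement to $2$ holds only under additional hypotheses (girth conditions and the like), and deciding whether a given planar graph has vertex arboricity $2$ is in fact a hard problem. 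So step (3) as stated is false, and with it your conclusion that the negated edge set is a forest. (Had the two-forest partition been available, you would have proved the stronger claim that every antibalanced planar graph has an \emph{acyclic} negation set, which should itself have raised suspicion.)

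What you actually need is only the weaker statement you isolated yourself: a partition $V = X \cup X^C$ with both $\Gamma[X]$ and $\Gamma[X^C]$ bipartite. That statement is exactly equivalent to $\chi(\Gamma) \leq 4$, i.e., to the Four Color Theorem, and that is precisely the proof given in the paper (due to Naserasr): take a proper $4$-coloring with classes $V_1, V_2, V_3, V_4$ and switch $V_3 \cup V_4$. An edge that remains negative lies inside $V_1 \cup V_2$ or inside $V_3 \cup V_4$, hence joins $V_1$ to $V_2$ or $V_3$ to $V_4$, so every negative edge of $\Sigma^{V_3 \cup V_4}$ lies in the cut $c(V_1 \cup V_3, V_2 \cup V_4)$ and the negative edge set is bipartite. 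Your argument is repaired by replacing the arboricity claim with the Four Color Theorem in this way; as written, it rests on a false lemma.
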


\begin{proof}
Let $\Sigma$ be an antibalanced planar graph with underlying graph $\Gamma$. Suppose $E^{-}(\Sigma) = E(\Sigma)$. Let $\gamma$ be a proper 4-coloring of $\Gamma$. Let $V_{i} := \gamma^{-1}(i)$. The negative edges in $\Sigma^{V_3 \cup V_4}$ are all contained in the cut $c(V_1 \cup V_3, V_2 \cup V_4)$. So the negative edge set of $\Sigma^{V_3 \cup V_4}$ is bipartite.
\end{proof}

I will now prove a series of lemmas that culminate in theorem \ref{subquartic-acyclic}, the main theorem of this paper.

\begin{lemma} \label{bignegdeg} Let $\Sigma$ be a signed simple graph. If a vertex $v$ has $d^+(v) \leq 1$, then $\Sigma^v$ has no fully negative circles through $v$ and the set of fully negative circles of $\Sigma^v$ is a subset of the set of fully negative circles of $\Sigma$. Furthermore, if $\Sigma$ has a fully negative circle through $v$, then $\Sigma^v$ will have strictly fewer fully negative circles than $\Sigma$. \end{lemma}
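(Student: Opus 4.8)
The plan is to analyze how switching at $v$ changes the edges incident to $v$, using the hypothesis $d^+(v) \le 1$ to control what happens. First I would observe that switching at $v$ negates exactly the edges incident to $v$: every positive edge at $v$ becomes negative and every negative edge at $v$ becomes positive, while all edges not incident to $v$ are unchanged. Since $d^+(v) \le 1$, after switching, $v$ has at most one negative edge incident to it in $\Sigma^v$ (namely, the image of the at-most-one positive edge). A fully negative circle through $v$ must use two distinct negative edges at $v$, so $\Sigma^v$ can have no fully negative circle through $v$. This gives the first claim.

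Next I would show that every fully negative circle of $\Sigma^v$ is already a fully negative circle of $\Sigma$. Let $C$ be a fully negative circle of $\Sigma^v$. By the previous paragraph, $C$ does not pass through $v$, so none of its edges is incident to $v$; hence switching at $v$ did not alter any edge of $C$, and $C$ has the same (all-negative) edge signs in $\Sigma$. Therefore $C$ is a fully negative circle of $\Sigma$, establishing the claimed subset relation.

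Finally, for the strict-inequality statement, suppose $\Sigma$ has a fully negative circle $D$ through $v$. Since $v$ has at least two negative edges in $\Sigma$ (the two edges of $D$ at $v$), and $d^+(v) \le 1$, switching at $v$ turns both of these edges positive in $\Sigma^v$, so $D$ is no longer fully negative in $\Sigma^v$. Thus $D$ is a fully negative circle of $\Sigma$ but not of $\Sigma^v$, and combined with the subset relation already proved, the set of fully negative circles of $\Sigma^v$ is a proper subset of that of $\Sigma$; since these are finite sets, the count strictly decreases.

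I do not expect a serious obstacle here: the only point requiring a little care is making sure the bookkeeping on which edges at $v$ change sign is stated cleanly, and that ``circle through $v$'' is correctly translated into ``uses two edges incident to $v$'' (valid because circles are closed paths with no repeated vertices, so $v$ has degree exactly $2$ in the circle). Everything else is a direct consequence of the fact that switching preserves all edges not incident to $v$.
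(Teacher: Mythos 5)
Your proposal is correct and follows essentially the same route as the paper: observe that in $\Sigma^v$ the vertex $v$ has negative degree at most $1$, so no fully negative circle passes through $v$ there, note that any new fully negative circle would have to pass through $v$ (only edges at $v$ change sign), and conclude the subset relation, with strictness because a fully negative circle of $\Sigma$ through $v$ loses that property after switching. Your write-up merely spells out the bookkeeping (and the strictness step) more explicitly than the paper does, which is fine.
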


\begin{proof}
Since $d^{-}(v) \leq 1$ in $\Sigma^v$, no fully negative circles contain $v$ in $\Sigma^v$. If $\Sigma^v$ has a fully negative circle that $\Sigma$ does not, it must be through $v$. But there are no fully negative circles through $v$ in $\Sigma^v$, so the set of fully negative circles of $\Sigma^v$ is a subset of the set of fully negative circles of $\Sigma$. 
\end{proof}

\begin{lemma}\label{bipartite-iff-delete-deg-3} Let $\Sigma$ be a signed graph. Let $X$ be the set of vertices in $\Sigma$ of degree 3 or less. The graph $\Sigma$ has a bipartite negation  set if and only if $\Sigma - X$ has a bipartite negation set. Furthermore, $\Sigma$ has an acyclic negation set if and only if $\Sigma - X$ has an acyclic negation set.  \end{lemma}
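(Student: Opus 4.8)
The statement is an "if and only if" about whether deleting all vertices of degree $\le 3$ preserves the existence of a bipartite (resp. acyclic) negation set. The backward direction should be the easy one: if $\Sigma - X$ has a bipartite negation set, I would try to extend it to all of $\Sigma$ by handling the deleted low-degree vertices one at a time. The natural tool is Lemma \ref{bignegdeg}, which says switching at a vertex with small positive degree only kills fully negative circles and never creates them. But bipartiteness of a negation set is about odd circles in the negative subgraph, not fully negative circles per se — so the first thing I would do is translate "bipartite negation set" into the language used in Lemma \ref{bignegdeg}. Via Proposition \ref{negation-negative}, a negation set is the negative edge set of some switching, and that set is bipartite exactly when the corresponding negative subgraph has no odd circle. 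I expect the right reformulation to be: $\Sigma$ has a bipartite negation set iff some switching of $\Sigma$ has no \emph{negative} circle of odd length, equivalently no fully-negative-looking obstruction after passing to the negative subgraph.

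Here is the plan in order. First, reduce to the core mechanism: I would show it suffices to prove that for a single vertex $v$ with $d(v) \le 3$, $\Sigma$ has a bipartite (acyclic) negation set iff $\Sigma - v$ does; the general statement then follows by induction, peeling off vertices of degree $\le 3$ one at a time (noting that deleting a vertex can only lower degrees, so $X$ really is removed by iterating this). Second, the forward direction for a single vertex: given a bipartite negation set of $\Sigma$, realize it as $E^-$ of a switching; its restriction to $\Sigma - v$ is the negative edge set of the corresponding switching of $\Sigma - v$, and deleting edges cannot create odd circles, so this restriction is still bipartite — hence a bipartite negation set of $\Sigma - v$. (The acyclic case is identical, since deleting edges cannot create circles.) Third, the backward direction for a single vertex: given a bipartite negation set $B'$ of $\Sigma - v$, realized as $E^-(\Sigma - v)$ after a switch, apply the same switching sequence to $\Sigma$; now all negative edges away from $v$ form a bipartite (resp. acyclic) graph, and $v$ has degree $\le 3$, so $v$ has at most one positive neighbor \emph{or} at most one negative neighbor — switch $v$ if necessary so that $d^-(v) \le 1$. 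By Lemma \ref{bignegdeg}, this switch creates no new fully negative circle, and in particular no new odd negative circle through $v$; with $d^-(v)\le 1$, $v$ lies on no circle of the negative subgraph at all, so the negative subgraph of the resulting graph is bipartite (resp. acyclic). That gives the desired negation set of $\Sigma$.

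The main obstacle is the case analysis hidden in "degree $\le 3$ lets us control $v$." A degree-3 vertex might have negative degree $2$ or $3$ in the current switching, in which case switching $v$ is needed, but switching $v$ changes the sign of circles through $v$ and could in principle turn an even positive circle into an odd negative one — this is exactly why we need Lemma \ref{bignegdeg} and why the bound $3$ (not $4$) matters: after switching, $d^-(v) \le 1$, so $v$ simply cannot sit on any circle of the negative subgraph, sidestepping all circle-sign bookkeeping. I would need to verify carefully that when $d(v)=3$ and the current $d^-(v) \in \{2,3\}$, the switch indeed brings $d^-(v)$ down to $\{1,0\}$, and that Lemma \ref{bignegdeg}'s hypothesis $d^+(v) \le 1$ is met (it is: $d^+(v) = 3 - d^-(v) \le 1$). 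The only subtlety is making sure the negative subgraph being acyclic/bipartite is genuinely unaffected on the $\Sigma - v$ part by the final switch at $v$ — but switching $v$ only toggles edges incident to $v$, so the rest of the negative subgraph is untouched, and its union with a vertex $v$ of negative-degree $\le 1$ remains acyclic/bipartite. I would write this last verification out explicitly since it is the crux.
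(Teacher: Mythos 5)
Your proposal is correct, and it rests on the same key mechanism as the paper's proof: a vertex of degree at most $3$ can always be switched, if necessary, so that its negative degree is at most $1$, and such a vertex lies on no fully negative circle, so restoring it cannot create an odd circle (or any circle) in the negative subgraph; the forward direction in both arguments is simply restriction of the negative edge set of a switching, which can only shrink the set. The route differs in its decomposition: the paper removes all of $X$ at once and, in the converse direction, performs a single simultaneous switch of every vertex of $X$ whose negative degree is at least $2$, whereas you peel off one vertex of degree at most $3$ at a time and induct, using the observation that deletion never increases degrees so the remaining vertices of $X$ stay eligible. Your sequential decomposition buys something concrete: under a simultaneous switch, an edge joining two switched vertices of $X$ does not change sign, so two adjacent vertices of $X$ that are both switched and joined by a negative edge can each still end with negative degree $2$, and the paper's assertion that afterwards every negative edge meeting $X$ is incident to a vertex of negative degree $1$ requires extra justification in that configuration; in your version only one newly restored vertex is ever switched at a given stage, with all previously restored vertices absorbed into the graph whose bipartite (acyclic) negation set the induction hypothesis supplies, so this interaction never arises. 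Your closing verification --- that switching $v$ leaves the negative subgraph away from $v$ untouched and attaches only a vertex of negative degree at most $1$, which therefore lies on no circle of the negative subgraph --- is exactly the crux, and invoking lemma \ref{bignegdeg} there is a correct, if slightly stronger than needed, way to justify it.
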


\begin{proof}
Suppose $\Sigma$ has a bipartite negation set $B$. Then $\Sigma - X$ has some negation set $C \subseteq B$. But since $C \subseteq B$, then $C$ is bipartite.

For the converse, suppose $\Sigma - X$ may be switched by $Y$ so the negative edge set is bipartite. That is, $E^-( (\Sigma - X)^Y )$ is bipartite.

There are two types of negative edges in $\Sigma^Y$: the negative edges of $(\Sigma - X)^Y$, and negative edges incident to vertices of $X$. 

In $\Sigma^Y$, switch only the vertices of $X$ that have negative degree at least 2 and suppose this gives the graph $\Sigma^{Y'}$. There are two types of negative edges in $\Sigma^{Y'}$: the negative edges of $(\Sigma-X)^Y$,  and edges which are incident to a vertex of negative degree 1 (i.e., a vertex in $X$). These two types of edges form a bipartition of $E^-(\Sigma^{Y'})$.

The negative edges incident to a vertex of negative degree 1 are not part of any fully negative circles. So if there is any fully negative circle of odd length in $\Sigma^{Y'}$, it must be contained entirely in $E^-((\Sigma - X)^Y)$. But $E^-((\Sigma - X)^Y)$ is bipartite, so any fully negative circles it contain must have even length. So $E^-(\Sigma^{Y'})$ is bipartite. 

In this proof, ``bipartite'' may be replaced with ``acyclic''.
\end{proof}

\begin{lemma}\label{4-core-lemma} A signed simple graph $\Sigma$ has a bipartite negation set if and only if the 4-core of $\Sigma$ has a bipartite negation set. Similarly, it has an acyclic negation set if and only if the 4-core of $\Sigma$ has an acyclic negation set.\end{lemma}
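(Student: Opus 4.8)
The plan is to derive this directly from Lemma \ref{bipartite-iff-delete-deg-3} by iterating it. Recall that the 4-core of $\Sigma$ is, by definition, obtained by repeatedly deleting vertices of degree less than 4 (i.e., degree $3$ or less) until no such vertices remain. So I would argue that a single application of Lemma \ref{bipartite-iff-delete-deg-3} handles one ``round'' of such deletions, and then induct on the number of rounds needed to reach the 4-core.

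More precisely, first I would set up the iteration carefully. Let $\Sigma_0 = \Sigma$, and given $\Sigma_i$, let $X_i$ be the set of vertices of $\Sigma_i$ of degree $3$ or less, and put $\Sigma_{i+1} = \Sigma_i - X_i$. Since $\Sigma$ is finite, this process stabilizes: there is some $k$ with $\Sigma_{k+1} = \Sigma_k$, meaning $X_k = \emptyset$, i.e. $\Sigma_k$ has minimum degree at least $4$. I would then note that $\Sigma_k$ is exactly the 4-core of $\Sigma$ — this requires a small observation that removing all low-degree vertices at once, repeatedly, produces the same graph as removing them one at a time, which is the standard fact that the $k$-core is well-defined independent of deletion order. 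Lemma \ref{bipartite-iff-delete-deg-3} gives, for each $i$, that $\Sigma_i$ has a bipartite (resp. acyclic) negation set if and only if $\Sigma_{i+1} = \Sigma_i - X_i$ does. Chaining these equivalences from $i = 0$ to $i = k-1$ yields that $\Sigma = \Sigma_0$ has a bipartite (resp. acyclic) negation set if and only if $\Sigma_k$, the 4-core, does.

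One technical point I would want to be careful about: Lemma \ref{bipartite-iff-delete-deg-3} as stated applies to ``a signed graph $\Sigma$'' with $X$ the set of vertices of degree $3$ or less, so applying it to $\Sigma_i$ is legitimate as long as each $\Sigma_i$ is a signed graph, which it is (an induced subgraph of a signed graph, with the inherited signature). I would also mention the degenerate cases: if $\Sigma$ already has minimum degree $\geq 4$ then $\Sigma$ is its own 4-core and there is nothing to prove; and if the 4-core is empty, the statement is vacuous (the empty graph trivially has a bipartite/acyclic negation set, namely $\emptyset$), though one could also just observe that in that case $\Sigma$ has a bipartite negation set for a separate reason — but really the cleanest route is to let the iteration and Lemma \ref{bipartite-iff-delete-deg-3} do all the work, including the empty case, since Lemma \ref{bipartite-iff-delete-deg-3} makes no nontriviality assumption on $\Sigma - X$.

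The main obstacle, such as it is, is the order-independence of the core-stripping process — making sure that $\Sigma_k$ as I've defined it really coincides with the object called ``the 4-core'' in the paper, where deletion is described as removing vertices of degree less than $4$ one at a time. This is a well-known fact about cores, so I would state it and give the one-line reason (a vertex of degree $<4$ stays of degree $<4$ as other vertices are deleted, so no vertex that ``should'' be removed ever escapes removal, and no vertex of the final minimum-degree-$\geq 4$ graph is ever eligible for removal), rather than belabor it. Everything else is just bookkeeping on a finite chain of ``if and only if'' statements, so no genuinely hard step remains.
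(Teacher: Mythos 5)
Your argument is correct and is exactly the route the paper takes: the paper's proof of Lemma \ref{4-core-lemma} is simply ``this follows immediately from Lemma \ref{bipartite-iff-delete-deg-3},'' and your write-up is just that iteration spelled out, including the (standard, and correctly handled) order-independence of the core-stripping process. No gap; you have merely made explicit what the paper leaves implicit.
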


\begin{proof}
This follows immediately from lemma \ref{bipartite-iff-delete-deg-3}.
\end{proof}

Lemma \ref{4-core-lemma} gets us tantalizingly close to having a bipartite negation set for every signed simple graph with maximum degree 4. As we will see in theorem \ref{subquartic-acyclic}, this is almost true; out of all signed simple graphs with maximum degree 4 there is only one switching class that does not have a bipartite negation set. 

For the remainder of this section, I will be working with a signed simple quartic graph $\Sigma$. I may assume that for any vertex $v$, $d^{-}(v) \leq 2$ and thus each vertex is a member of at most one fully negative circle. Because I am concerned only with circles, I may assume the graph is 2-connected.

\begin{lemma}\label{nochord} If a signed simple quartic graph $\Sigma$ has a fully negative circle $C$ with a chord $xy$, then $\Sigma^{\lbrace x, y \rbrace}$ has strictly fewer fully negative circles. In particular, $C$ will not be fully negative in $\Sigma^{\lbrace x, y \rbrace}$.\end{lemma}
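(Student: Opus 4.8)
The plan is to compare the families of fully negative circles of $\Sigma$ and of $\Sigma^{\{x,y\}}$ and show the latter is a proper subfamily of the former. Concretely I will prove (i) $C$ is fully negative in $\Sigma$ but not in $\Sigma^{\{x,y\}}$, and (ii) every circle that is fully negative in $\Sigma^{\{x,y\}}$ is already fully negative in $\Sigma$; together these force the number of fully negative circles to strictly decrease. The engine of the argument is an observation at the ends of the chord. Since $x$ lies on the fully negative circle $C$, the two edges of $C$ at $x$ are negative, so $d^-(x) \geq 2$; the standing assumption $d^-(x) \leq 2$ then forces $d^-(x) = 2$, and those two $C$-edges are the \emph{only} negative edges at $x$. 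As $\Sigma$ is quartic, the other two edges at $x$ are positive, and one of them is the chord $xy$ (which is not an edge of $C$). The symmetric statements hold at $y$. I would record these facts at the outset, as everything afterwards is bookkeeping against them.

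For (i): switching $x$ negates exactly the four edges at $x$; the two $C$-edges at $x$ are not the chord $xy$, and — since $x$ and $y$ are non-consecutive on $C$ — they are not incident to $y$, so the subsequent switch at $y$ does not flip them back. Hence in $\Sigma^{\{x,y\}}$ these two edges are positive, so $C$ is not fully negative there, whereas it is fully negative in $\Sigma$.

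For (ii): suppose $D$ is fully negative in $\Sigma^{\{x,y\}}$ but contains an edge $e$ that is positive in $\Sigma$. Then $e$ changed sign, so $e$ has exactly one end in $\{x,y\}$; say $x$. Then $D$ meets $x$ in $e$ and in a second edge $e'$, both negative in $\Sigma^{\{x,y\}}$. If $e' = xy$, this contradicts the fact that $xy$, having both ends switched, keeps its (positive) sign. Otherwise $e' \neq xy$ is incident to $x$ but not to $y$ (simplicity), hence it is flipped exactly once, so $e'$ was positive in $\Sigma$. Now $e$ and $e'$ are two distinct positive edges at $x$, so by quartic-ness they are \emph{all} the positive edges at $x$ — yet $xy$ is a positive edge at $x$ and $xy \notin \{e, e'\}$, a contradiction. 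The case where $e$ is incident to $y$ is identical. So $D$ has no edge that is positive in $\Sigma$, i.e. $D$ is fully negative in $\Sigma$.

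There is no deep obstacle; the only thing needing care is the sign-change bookkeeping under switching $\{x,y\}$ — that $xy$ itself is unchanged, that any other edge incident to $x$ or to $y$ changes sign exactly once, and that the two $C$-edges at $x$ are distinct from $xy$ and not incident to $y$ — all of which rest on $\Sigma$ being simple and on $xy$ being a chord rather than an edge of $C$. The conceptual point is simply that in a quartic graph with $d^- \leq 2$ everywhere, a fully negative circle pins down the entire negative neighborhood of each of its vertices, which is exactly what makes switching along a chord destroy that circle without creating any new fully negative ones.
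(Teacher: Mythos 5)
Your proof is correct and follows essentially the same route as the paper: establish that the chord is positive and that the two $C$-edges are the only negative edges at $x$ and at $y$ (using $d^-\leq 2$ and quarticness), observe that switching $\lbrace x, y\rbrace$ destroys $C$, and show no new fully negative circle can arise because such a circle would need two negative edges at a switched vertex, which the sign bookkeeping rules out. Your step (ii) is just a more explicit, edge-by-edge version of the paper's observation that $x$ and $y$ have negative degree 1 after the switch.
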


\begin{proof}
Suppose $\Sigma$ has a fully negative circle $C$ with a chord $e$ with ends $x$ and $y$. The chord $e$ cannot be negative because I assumed the vertices of $\Sigma$ had negative degree at most 2. So $e$ is positive. In $\Sigma^{\lbrace x, y \rbrace}$, $x$ and $y$ have negative degree 1 and thus $C$ is not fully negative. Since any fully negative circle introduced by this switching (i.e., a fully negative circle in $\Sigma^{\lbrace x, y \rbrace}$ but not in $\Sigma$) must pass through a subset of the switched vertices and the two switched vertices both have negative degree 1, no new fully negative circles were introduced in $\Sigma^{\lbrace x, y \rbrace}$. Thus $\Sigma^{\lbrace x, y \rbrace}$ has strictly fewer fully negative circles.
\end{proof}

\begin{figure}
\caption{The kind of circle we might expect to see in the proof of lemma \ref{BipartiteQuartic}.}
\label{cases-summary}
\begin{center}
\scalebox{0.65}{
\includegraphics[scale=1]{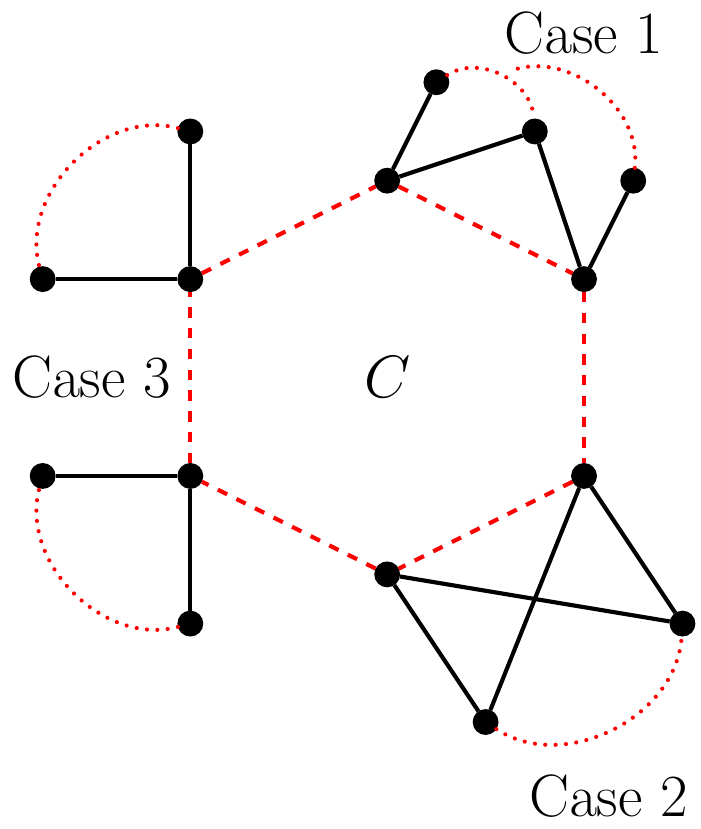}
}
\end{center}
\end{figure}

\begin{figure}
\caption{Case 1 in the proof of lemma \ref{BipartiteQuartic}.}
\label{case-1}
\begin{center}
\scalebox{0.65}{
\includegraphics[scale=1]{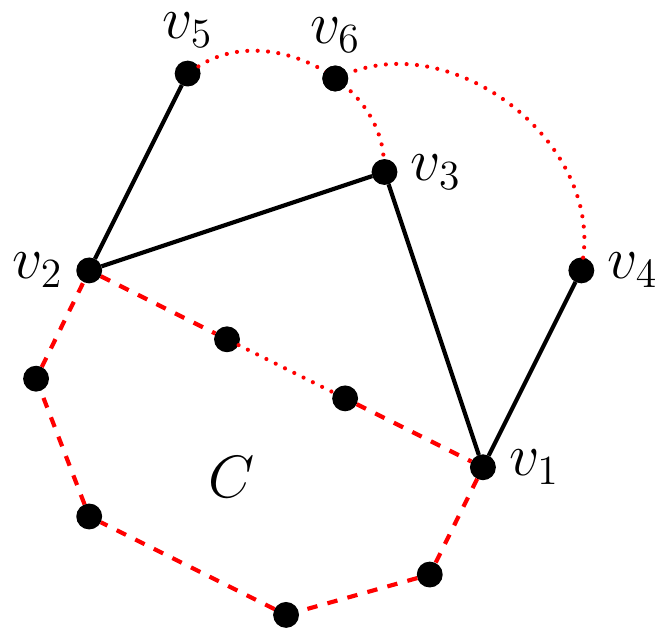}
}
\end{center}
\end{figure}

\begin{lemma} \label{BipartiteQuartic} Let $\Sigma$ be a signed simple quartic graph not in the switching class of $-K_5$. Then $\Sigma$ has an acyclic negation set. \end{lemma}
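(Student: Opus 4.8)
The plan is to induct on the number of fully negative circles of $\Sigma$, the base case being a graph with no fully negative circles (which already has the acyclic negation set given by its negative edges, by Harary's criterion applied to the edge-induced negative subgraph being a forest). For the inductive step, I want to exhibit a switching $\Sigma^W$ that strictly decreases the number of fully negative circles without leaving the switching class of $-K_5$; then the induction hypothesis finishes the job. Note that $\Sigma^W$ need not be quartic, but by Lemma \ref{4-core-lemma} (passing to the $4$-core) and the standing reductions at the start of the section I may keep working with the quartic $2$-connected case throughout, and I only need to be careful that the reduced graph is not in the switching class of $-K_5$ — the lone exceptional switching class being precisely why $-K_5$ is excluded.

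First I would pick a fully negative circle $C$ of minimum length. If $C$ has a chord, Lemma \ref{nochord} immediately gives a switching reducing the count, so assume $C$ is chordless (in the underlying graph, as a subgraph). Since $\Sigma$ is quartic, each vertex of $C$ has exactly two edges leaving $C$; since each vertex lies on at most one fully negative circle and has negative degree at most $2$, those two edges leaving $C$ are positive, or at most one is negative and used by a path between two vertices of $C$ forming another (longer, by minimality) fully negative circle. The heart of the argument is a case analysis on how these external edges and external fully-negative paths attach to $C$ — this is exactly what Figures \ref{cases-summary}, \ref{case-1} are setting up. In the generic case I would switch a single vertex $v$ of $C$ together with a suitable neighbor so that $C$ loses its full negativity (its two incident edges on $C$ become positive) while controlling newly created fully negative circles: a new fully negative circle must pass through a switched vertex, and after switching such a vertex has negative degree $\le 1$, so if we switch an independent-enough set, no new fully negative circle survives — this is the same mechanism as in Lemmas \ref{bignegdeg} and \ref{nochord}, just iterated.

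The main obstacle is the degenerate configurations where every attempt to switch along $C$ either recreates as many fully negative circles as it destroys, or forces the graph into the switching class of $-K_5$. Concretely, $-K_5$ is $4$-regular, $2$-connected, chordless-triangle-free in the relevant sense, and every vertex lies on a fully negative circle; the case analysis must show that if none of the switching moves work, the local structure around $C$ propagates (using $2$-connectedness and quarticity) to force $\Sigma \cong -K_5$ up to switching. I expect this forcing argument — showing the ``bad'' case is rigid — to be where most of the work lies, and it will likely proceed by: (i) bounding $|C|$ (small circles like triangles or quadrilaterals, where the external edges have few places to go), (ii) in each small case, tracking the $4$ external edges per vertex and using $\Delta\Sigma = 4$ to count that the vertex set cannot grow beyond $5$ without opening a usable switching move, and (iii) checking the $5$-vertex $4$-regular graph is $K_5$ and its only all-negative-circle sign pattern is (switching-equivalent to) $-K_5$. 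Longer circles $C$ should be easier: more room to find a vertex whose external edges are both positive, switch it with one neighbor, and win outright.

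Throughout, after performing the reducing switch I would re-invoke the section's standing assumptions (replace $\Sigma$ by its $4$-core, restore $2$-connectedness by working component-wise and noting acyclic negation sets of the pieces assemble into one for the whole) so that the induction hypothesis applies to a graph of the same type with strictly fewer fully negative circles; the only thing to verify at each application is that we have not landed in the switching class of $-K_5$, which the case analysis guarantees because that class is exactly the obstruction we carved out.
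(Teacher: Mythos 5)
Your high-level frame agrees with the paper's: a descent on the number of fully negative circles, using Lemma \ref{nochord} to assume the chosen circle $C$ is chordless and Lemma \ref{bignegdeg} to control vertices of negative degree $\geq 3$, with $-K_5$ emerging as the unique rigid obstruction. But the proposal stops exactly where the real proof begins. You explicitly defer the case analysis (``I expect this forcing argument \dots\ to be where most of the work lies''), and that case analysis \emph{is} the proof. The hard configurations are those in which every available switch destroys $C$ but creates a new fully negative circle; the paper handles them by first reducing to the situation where each vertex of $C$ has its two positive neighbors of negative degree exactly $1$ and joined by a fully negative path, then splitting into three cases according to whether two vertices of $C$ share both, exactly one, or none of their positive neighbors. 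The shared-one-neighbor case is resolved by locating a vertex of negative degree $3$ on the two overlapping negative paths; the shared-both-neighbors case is where $-K_5$ actually appears (as one specific subcase with $v_1,v_2$ adjacent in $C$, $v_3v_4$ negative, and $v_5=v_6=v_7$ adjacent to $v_4$); and the no-shared-neighbors case uses $2$-connectedness to find two circles $C_1,C_2$ attached to $C$ and joined by a path avoiding $C$, then propagates the fully negative circle along that path until two fully negative circles are joined by a positive edge whose ends can be switched. None of these mechanisms appears in your sketch, and your substitute plan for the rigidity step --- bound $|C|$, count vertices up to $5$, identify $K_5$ --- is unsubstantiated and dubious: the stubborn configurations are not confined to short circles (Case 3 involves arbitrarily long circles and connecting paths, and is resolved by propagation, not by counting), and $-K_5$ is pinned down by a local shared-neighbor configuration rather than by a small-graph enumeration.

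There are also two technical slips worth correcting. Switching never changes the underlying graph, so $\Sigma^W$ is automatically quartic; your claim that it ``need not be quartic'' and the consequent re-passing to the $4$-core inside the induction confuse switching with deletion. Likewise, the worry about ``not leaving the switching class of $-K_5$'' is vacuous: switching preserves the switching class, so if $\Sigma$ is not switching-equivalent to $-K_5$ then no switch of it ever is; the only role $-K_5$ plays is that one terminal configuration of the case analysis is literally that signed graph, contradicting the hypothesis. Your base case (no fully negative circles implies the negative edge set is acyclic, hence an acyclic negation set) is fine, but as it stands the proposal is an outline of intent rather than a proof.
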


\begin{proof}
I show that $\Sigma$ may be switched to $\Sigma'$ with strictly fewer fully negative circles. Consider an arbitrary fully negative circle, $C$. Lemma \ref{nochord} allows us to assume $C$ has no chords. Lemma \ref{bignegdeg} allows us to assume each vertex in $C$ has negative degree exactly 2 and thus has exactly 2 positive neighbors.  These positive neighbors have negative degree 0, 1, 2, or 3. Suppose $v$ is a vertex in $C$ with positive neighbors $x$ and $y$. In $\Sigma^v$, $C$ will not be fully negative and there is possibly a fully negative circle through $x$, $v$, and $y$. If $x$ or $y$ has negative degree 0 in $\Sigma$, then it has negative degree 1 in $\Sigma^v$. Therefore, it is not part of any fully negative circle in $\Sigma^v$. If $x$ or $y$ has negative degree 2 or 3 in $\Sigma$ then it has negative degree 3 or 4 in $\Sigma^v$. So by lemma \ref{bignegdeg}, $\Sigma^{\lbrace v, x \rbrace }$ (or perhaps $\Sigma^{\lbrace v, y \rbrace}$) will have strictly fewer fully negative circles than $\Sigma$. So I only need to consider the case where the vertices positively adjacent to $C$ have negative degree 1 in $\Sigma$. I consider three cases determined by whether a pair of vertices in $C$ share both positive neighbors, share one positive neighbor, or no vertices of $C$ share any positive neighbors. 

I take a slight detour to make an important observation. Consider a vertex $v$ in $C$ with positive neighbors $x$ and $y$. In $\Sigma^v$, $C$ is not fully negative but there may be a fully negative circle through $v$, $x$, and $y$ that was not fully negative in $\Sigma$. For this to be true, there must be a fully negative path from $x$ to $y$ in $\Sigma$. If $x$ and $y$ are not joined by a fully negative path, then $\Sigma^v$ will have fewer fully negative circles than $\Sigma$. So I may assume that for each vertex in $C$, its positive neighbors are joined by a fully negative path. This gives us circles that look something like the circle in figure \ref{cases-summary}.

\textbf{Case 1:} Suppose two vertices, $v_1, v_2$, of $C$ share exactly 1 positive neighbor, say $v_3$, in common. Suppose the positive neighbors of $v_1$ are $v_3$ and $v_4$ and the positive neighbors of $v_2$ are $v_3$ and $v_5$. Suppose $v_3$ and $v_4$ are joined by the fully negative path $P_1$ and that $v_3$ and $v_5$ are joined by the fully negative path $P_2$. Note that $P_1$ and $P_2$ share some common segment containing $v_3$ or else $v_3$ would have negative degree larger than 1. Since $P_1$ and $P_2$ share a common segment, they must also share a common vertex with negative degree at least 3, say $v_6$. Refer to figure \ref{case-1}. In $\Sigma^{v_1}$, the circle $C$ is not fully negative, but the circle $v_1v_3P_1v_1$ is fully negative. In fact, there could be more negative circles in $\Sigma^{v_1}$ that are not fully negative in $\Sigma$, but they all must contain $v_3$ and $v_6$. But since $v_6$ has negative degree 3, then $\Sigma^{\lbrace v_1, v_6 \rbrace}$ has strictly fewer fully negative circles than $\Sigma$ by lemma \ref{bignegdeg}. 

\textbf{Case 2:} Suppose two vertices $v_1, v_2$ of $C$ share both of their positive neighbors, $v_3$ and $v_4$. If $v_3$ and $v_4$ are positively adjacent, I need only switch $\Sigma$ by $X = \lbrace v_1, v_2, v_3, v_4 \rbrace$. In $\Sigma^X$, $C$ is not fully negative, $v_1$ and $v_2$ each have negative degree at most 1 (depending on if $v_1$ and $v_2$ are adjacent in $C$), and $v_3$ and $v_4$ will have negative degree 0, thus there are no fully negative circles through any of $v_1, v_2, v_3$, or $v_4$. 

If $v_3$ and $v_4$ are negatively adjacent, I must separately consider whether $v_1$ and $v_2$ are adjacent or not in $C$. If $v_1$ and $v_2$ are not adjacent in $C$ then in $\Sigma^{v_1}$, $C$ is not fully negative but the triangle $v_1 v_3 v_4$ is fully negative. In this fully negative triangle, the vertex $v_3$ is positively adjacent to a vertex of negative degree 2. Thus in $\Sigma^{\lbrace v_1, v_3 \rbrace}$, $d^-(v_2) = 3$ so I may appeal to lemma \ref{bignegdeg} and conclude this subcase. 

Suppose $v_1$ and $v_2$ are adjacent in $C$. Let  $X = \lbrace v_1, v_2, v_4 \rbrace$. In $\Sigma^X$, $v_1$, $v_2$, and $v_3$ are positively adjacent to (respectively), $v_5$, $v_6$, and $v_7$ where  $\lbrace v_5, v_6, v_7 \rbrace \cap \lbrace v_1, v_2, v_3, v_4 \rbrace = \emptyset$. It is possible that $v_5 = v_6 = v_7$ in which case I must consider two possibilities: either $v_4 \sim v_5$ or $v_4 \not\sim v_5$. I am assuming $d^-(v_4) = d^-(v_5) = 1$ because if not, then I am done by previous remarks. If $v_4 \sim v_5$, then they are negatively adjacent and I have constructed a graph in the switching class of $-K_5$, the one exception to the theorem.  If $v_4 \not\sim v_5$ then I may switch $\Sigma^X$ by $Y = \lbrace v_1, v_2, v_3, v_4, v_5 \rbrace$. In $\Sigma^X$, the edges in $c(Y, V(\Sigma) - Y)$ are precisely the two negative edges incident with $v_4$ and $v_5$. So in $(\Sigma^X)^Y$, $d^+(v_4) = d^+(v_5) = 4$. Thus I may switch any vertex in the fully negative triangle $v_1v_2v_3$ and conclude this subcase. If $v_5, v_6$ and $v_7$ are not all equal, then two of the vertices in the fully negative triangle share exactly one positive neighbor in common, namely $v_4$, and thus I am done by case 1. 

\textbf{Case 3:} Assume all the vertices of $C$ have no positive neighbors in common. Recall that I have assumed that for each vertex in $C$, its positive neighbors are joined by a fully negative path. If I switch any vertex in $C$, I am simply shifting the discussion from $C$ to a new fully negative circle which I can assume also falls under case 3. By lemma \ref{nochord}, I may assume all of these circles are induced circles. Choose two of these induced circles, different than $C$, that each share a vertex with $C$, say $C_1$ and $C_2$. 

Since $\Sigma$ is 2-connected, I can choose two vertex-disjoint paths between $C_1$ and $C_2$. By appropriate choice of $C_1$ and $C_2$, I may assume one of these paths does not intersect $C$. Denote the vertex shared by $C$ and $C_1$ by $v_1$ and the vertex shared by $C$ and $C_2$ by $v_2$. The circles sharing a single vertex with $C$ can be assumed to have very special structure. Consider $\Sigma^{v_1}$, where $C$ is not fully negative but $C_1$ is fully negative.  If switching $v_1$ results in a net gain in number of negative circles, then $C_1$ must have vertices of negative degree at least 3 which I can switch to eliminate those circles. So I may always assume that when I switch the vertices on $C$, I eliminate exactly one negative circle (namely $C$) and create exactly one negative circle. If the negative circle I create does not have the structure I am examining here in case 3, then I can use what I know about cases 1 and 2 to achieve a switching with strictly fewer negative circles. 

So in $\Sigma^{\lbrace v_1, v_2 \rbrace }$, $C$ is not fully negative but $C_1$ and $C_2$ are fully negative. Suppose the path between $C_1$ and $C_2$ is $P = w_1w_2\ldots w_n$. The vertex $w_1$ has two positively adjacent neighbors which are joined by a fully negative path. If I switch $w_1$, I will have two fully negative circles joined by a path of length $n-1$. I proceed until vertices $w_1, \ldots, w_{n-2}$ have all been switched. Then I will have two fully negative circles joined by an edge. The edge from $w_{n-1}$ to $w_n$ must be positive because otherwise the negative circle $C_2$ would have a vertex of negative degree at least 3 in $\Sigma^{v_2}$ and I would be done by previous remarks. Since the edge joining the two fully negative circles must be positive, I can switch both its ends to reduce the number of fully negative circles. 
\end{proof}

Combining lemma \ref{4-core-lemma} and \ref{BipartiteQuartic}, I achieve the main result of this paper, theorem \ref{subquartic-acyclic}.

\begin{thm} \label{subquartic-acyclic}Let $\Sigma = (\Gamma, \sigma)$ be a simple signed graph with 4-core not in the switching class of $-K_5$. Then $\Sigma$ has an acyclic negation set.\end{thm}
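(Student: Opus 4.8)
The plan is to chain Lemma~\ref{4-core-lemma} together with Lemma~\ref{BipartiteQuartic}. By Lemma~\ref{4-core-lemma}, $\Sigma$ has an acyclic negation set if and only if its $4$-core $\Sigma'$ does, so it is enough to produce an acyclic negation set for $\Sigma'$. The crucial observation is that $\Sigma'$ is quartic: the $4$-core always has minimum degree at least $4$, and since $\Gamma$ is subquartic (the setting of this theorem), every vertex of $\Sigma'$ has degree exactly $4$. Switching leaves degrees unchanged, so every signed graph in the switching class of $\Sigma'$ is quartic too, and Lemma~\ref{BipartiteQuartic} is applicable.

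Lemma~\ref{BipartiteQuartic} is stated under the assumption that the graph is $2$-connected, whereas $\Sigma'$ need be neither connected nor $2$-connected, so the next step is to pass to blocks. Every fully negative circle of $\Sigma'$ lies inside a single block, and a set of edges is acyclic exactly when its intersection with each block is acyclic (likewise for the components of $\Sigma'$). For each block $B$ of $\Sigma'$, Lemma~\ref{BipartiteQuartic} supplies --- provided $B$ is not in the switching class of $-K_5$ --- an acyclic negation set of $B$, that is, a switching set $X_B \subseteq V(B)$ for which $B^{X_B}$ has no fully negative circle. I would then reconcile these into a single switching set $X$ of $\Sigma'$ by walking the block--cut tree: at each block $B$, take $X \cap V(B)$ to be whichever of $X_B$ or $V(B) \setminus X_B$ matches the already-determined status of the parent cut vertex of $B$ (both choices leave $B$ without a fully negative circle, and exactly one of the two contains any prescribed vertex). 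Then $(\Sigma')^{X}$ has no fully negative circle, so its negative edge set is a forest and hence, by Proposition~\ref{negation-negative}, an acyclic negation set of $\Sigma'$; lifting it back through Lemma~\ref{bipartite-iff-delete-deg-3} (reinserting and reswitching the removed vertices of degree at most $3$) yields an acyclic negation set of $\Sigma$.

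The step I expect to take the most care is making sure that forbidding $-K_5$ in the $4$-core really forbids it block by block. Here being quartic is decisive: if a block $B$ of $\Sigma'$ had underlying graph $K_5$, each of its five vertices would already have degree $4$ within $B$, so $B$ could contain no cut vertex and would be an entire component of $\Sigma'$; since $\Sigma$ is connected and subquartic, a $K_5$ component can occur only when $\Sigma = \Sigma'$ has underlying graph $K_5$, which is exactly the case excluded by hypothesis. So the block-level exception collapses onto the one already assumed away, and every block fed to Lemma~\ref{BipartiteQuartic} is legitimate. What remains is routine bookkeeping: verifying the block--cut-tree reconciliation and confirming that reswitching the low-degree vertices preserves acyclicity (not merely bipartiteness) --- this is precisely the ``acyclic'' clause of Lemma~\ref{bipartite-iff-delete-deg-3}. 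All of the genuine combinatorial content --- the case analysis on a chordless fully negative circle and its positive neighbourhood --- already lives inside the proof of Lemma~\ref{BipartiteQuartic}.
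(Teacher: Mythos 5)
Your overall route is the same as the paper's: the paper proves Theorem~\ref{subquartic-acyclic} in one line by combining Lemma~\ref{4-core-lemma} with Lemma~\ref{BipartiteQuartic}, and your supplementary observations are sound details the paper leaves implicit --- that the 4-core of a subquartic graph is quartic (your subquartic reading of the theorem matches the paper's intent, cf.\ Corollary~\ref{BipartiteSubquartic}), and that a $K_5$ block of the quartic 4-core would have all its vertices of degree 4 inside that block, hence would be an entire component and, by connectivity and the degree bound, all of $\Sigma$, so the $-K_5$ exception is exactly the one excluded by hypothesis.

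The one step that does not go through as written is the block decomposition. Lemma~\ref{BipartiteQuartic} is stated for quartic graphs, but a block of a quartic graph need not be quartic: a connected 4-regular graph is bridgeless (every vertex has even degree, so every edge lies on a circle), so a cut vertex lies in exactly two blocks and has degree exactly 2 in each of them, and those blocks violate the lemma's hypothesis. (Concretely, glue two copies of ``$K_5$ minus an edge, with a new vertex joined to the two degree-3 vertices'' at the new vertex: the result is quartic with a cut vertex, and each block contains a degree-2 vertex.) The 2-connectivity you are trying to accommodate is not a hypothesis of the lemma's statement; it is a reduction asserted inside the paper's proof of that lemma. Using the lemma as stated, you should apply it to each connected component of the 4-core --- components of a quartic graph are quartic, your $-K_5$ argument does exactly the needed work at the component level, and no block--cut-tree reconciliation is needed since switching sets of disjoint components combine freely. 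If instead you want to justify the 2-connectivity reduction yourself, blocks alone do not suffice: you would need to re-apply the reduction of Lemma~\ref{bipartite-iff-delete-deg-3} inside each block (the degree-2 cut vertex gets deleted) and run an induction, which is a different argument from the one you wrote.
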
 

\begin{cor} \label{BipartiteSubquartic} A connected signed simple graph $\Sigma$ with maximum degree 4 and a vertex of degree less than 4 has an acyclic negation set. \end{cor}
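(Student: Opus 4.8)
The plan is to deduce this directly from Theorem \ref{subquartic-acyclic}, so the only real work is to verify its hypothesis for $\Sigma$: that the $4$-core of $\Sigma$ is not in the switching class of $-K_5$. The first step is the observation that switching never changes the underlying graph, so a signed graph lies in the switching class of $-K_5$ only if its underlying graph is $K_5$. Hence it suffices to show that the $4$-core of $\Sigma$ is not (isomorphic to) $K_5$; in particular the empty $4$-core causes no trouble.

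Next I would argue by contradiction: suppose the $4$-core of $\Sigma$ is $K_5$. The $4$-core is a subgraph of $\Sigma$, so $\Sigma$ contains a copy of $K_5$ on some vertex set $S$ with $|S| = 5$. Each vertex of $S$ already has degree $4$ inside this $K_5$, and since $\Delta\Gamma = 4$ no vertex of $S$ can have a neighbor outside $S$. Because $\Sigma$ (hence $\Gamma$) is connected, this forces $V(\Gamma) = S$, i.e.\ $\Gamma = K_5$. But then every vertex of $\Gamma$ has degree $4$, contradicting the assumption that $\Sigma$ has a vertex of degree less than $4$. So the $4$-core of $\Sigma$ is not $K_5$, and therefore not in the switching class of $-K_5$.

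With the hypothesis verified, Theorem \ref{subquartic-acyclic} immediately gives an acyclic negation set of $\Sigma$, completing the proof. The single idea in the argument is the remark that a $K_5$ sitting inside a connected graph of maximum degree $4$ must be the whole graph; everything else is bookkeeping, so I do not expect a genuine obstacle here.
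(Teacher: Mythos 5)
Your proposal is correct and follows exactly the route the paper intends: the corollary is stated without proof as an immediate consequence of Theorem \ref{subquartic-acyclic}, and your verification that a $K_5$ inside a connected graph of maximum degree 4 would have to be the whole graph (contradicting the vertex of degree less than 4) is precisely the implicit argument. (One could even note that under these hypotheses the 4-core is in fact empty, but your weaker observation already suffices.)
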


The proof of lemma \ref{BipartiteQuartic} is constructive and gives us an algorithm for finding such a bipartite negative edge set. I give the algorithm below. Step 8 describes the first case of the proof, steps 9--13 describe case 2, and step 14 describes case 3.

\begin{enumerate}[Step 1.)]
\item Reduce the signed graph $\Sigma$ to the 4-core. If the 4-core has multiple components, we can execute the rest of the algorithm on each component separately.
\item \label{find-circle}Find a fully negative circle $C$. 
\item \label{after-finding-circle} If $C$ has a vertex with negative degree greater than 2, switch that vertex and go to step \ref{find-circle}. Otherwise, continue.
\item If $C$ has a chord, switch the ends of the chord and go to step \ref{find-circle}. Otherwise, continue.
\item  If any vertex $v$ in $C$ has its two positive neighbors not in the same component of the negative subgraph (i.e., the neighbors are not connected by a fully negative path), switch $v$ and go to step \ref{find-circle}. Otherwise, continue.
\item If any positive neighbor of the fully negative circle has negative degree other than 1, go to step \ref{step-3a}. Otherwise, go to step \ref{step-4}.
\item  \label{step-3a} If the negative degree of the positive neighbor is 0, switch the positive neighbor in $C$ and then go to step \ref{find-circle}. If the negative degree of the positive neighbor is greater than 1, switch that vertex and one of its neighbors in $C$. Then go to step \ref{find-circle}.
\item \label{step-4} (Case 1 of proof) If any two vertices in $C$, say $v_1$ and $v_2$, share exactly 1 positive neighbor, say $v_3$, look for a vertex, say $v_4$, of negative degree at least 3 on the fully negative path connecting the positive neighbors of $v_1$ or on the fully negative path connecting the positive neighbors of $v_2$. If $v_4$ is on the fully negative path connecting the positive neighbors of $v_1$ then switch $\lbrace v_1, v_4 \rbrace$ and go to step \ref{find-circle}. If $v_4$ is on the fully negative path connecting the positive neighbors of $v_2$ then switch $\lbrace v_2, v_4 \rbrace$ and go to step \ref{find-circle}. Otherwise, continue to step \ref{step-case-2}.
\item \label{step-case-2}(Case 2 of proof) If two vertices in $C$, say $v_1$ and $v_2$ share both of their positive neighbors (say $v_3$ and $v_4$), and those neighbors are \emph{not} negatively adjacent, then switch $\lbrace v_1, v_2, v_3, v_4 \rbrace$ and then go to step \ref{find-circle}. If their neighbors, $v_3$ and $v_4$ are negatively adjacent \emph{and} $v_1$ and $v_2$ are adjacent in $C$ then go to step \ref{5a}. If $v_3$ and $v_4$ are negatively adjacent \emph{and} $v_1$ and $v_2$ are not adjacent in $C$, go to step \ref{5b}. If no two vertices in $C$ share both of their positive neighbors, go to step \ref{step-6}.
\item \label{5a} Switch $\Sigma$ by $\lbrace v_1, v_2, v_4 \rbrace$. Vertices $v_1, v_2$ and $v_4$ have positive neighbors $v_5, v_6, v_7$ (distinct from $v_1, v_2, v_3, v_4$). If $v_5 = v_6 = v_7$ go to step \ref{5a1}. Otherwise, go to step \ref{5a2}.
\item \label{5a1} If $v_4$ is adjacent to $v_5$, then we have constructed a graph in the switching class of $-K_5$ and this graph does not have a bipartite negation set. If $v_4$ is not adjacent to $v_5$, then switch $\lbrace v_2, v_3, v_4, v_5 \rbrace$ and go to step \ref{find-circle}.  
\item \label{5a2} We have created a fully negative triangle, $v_1v_2v_3$ and two of them share exactly one positive neighbor in common, namely $v_4$. So set the triangle $v_1v_2v_3$ as $C$ and go to step \ref{step-4}.
\item \label{5b} If $v_1$ and $v_2$ are not adjacent in $C$, switch $\lbrace v_1, v_2, v_3 \rbrace$ (the two vertices in $C$ and one of their common neighbors). Go to step \ref{find-circle}.
\item \label{step-6} (Case 3 of Proof) (If this state of the algorithm is reached, $C$ should have no two vertices that share any of their positive neighbors.) If there are any fully negative circles that do not fall into this case, choose that circle as $C$ and go to step \ref{after-finding-circle}. Once all fully negative circles in the graph fall under this case, proceed. If no fully negative circles remain, go to step \ref{step-15}. Otherwise, choose two circles $C_1$ and $C_2$, each sharing a vertex with $C$, such that $C_1$ and $C_2$ are vertex disjoint and are joined by a path that does not intersect $C$. Suppose $C_1$ shares vertex $v_1$ with $C$ and $C_2$ shares vertex $v_2$ with $C$. Suppose the path joining $C_1$ to $C_2$ is $P = w_1w_2\ldots w_n$ where $w_1 \in C_1$ and $w_n \in C_2$. Switch $v_1$. Choose a fully negative circle through $v_1$  and $w_1$ as $C$ and go to step \ref{after-finding-circle}. If this step of the algorithm is reached again with this choice of $C$, switch the $w_i$ with largest index in $C$ and restart the algorithm with the negative circle containing $w_i$ and $w_{i+1}$ as $C$ (where $i+1 \leq n$). If, through repeated application of step \ref{step-6}, we achieve a fully negative circle that is adjacent to $w_n$ by a positive edge, then switch $\lbrace v_2, w_{n-1}, w_n \rbrace$.
\item \label{step-15} At this step, we have a switching of the 4-core with an acyclic negation set. Just as the 4-core is achieved by deleting vertices of degree 3 or less repeatedly, we will now add them back in stages and in the reverse order: the last vertices removed are the first added back. After we add back a group of vertices, we may switch them to guarantee that there are no fully negative circles through any vertex in that group. This is possible because of lemma \ref{bignegdeg}. Once the final group of vertices have been added on, we will have a switching of $\Sigma$ with an acyclic negative edge set.
\end{enumerate}

\section{Packing Negation Sets} 

For a negation set $B$, the \emph{packing number} of $B$ is the size of the largest family of disjoint negation sets to which $\lbrace B \rbrace$ can be enlarged. It is denoted by $\pi(B)$. In theorem \ref{packing-1}, I find an expression for $\pi(B)$. Theorem \ref{packing-algorithm} describes an algorithm for calculating that expression, and in its proof we see how we can recover such a family from the algorithm. 

\begin{thm}\label{packing-1} Let $\Sigma$ be an unbalanced signed graph such that $B:= E^-(\Sigma)$ is bipartite. Let $\mathcal{D}$ be the collection of all stable bipartitions $\lbrace X, Y \rbrace$ of $\Sigma{:}E^-$. Then $\pi(B) = \max_{\lbrace X, Y \rbrace \in \mathcal{D}} d^+(X, Y) + 1$.\end{thm}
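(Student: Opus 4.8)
The plan is to establish the formula $\pi(B) = \max_{\{X,Y\}\in\mathcal D} d^+(X,Y)+1$ by proving the two inequalities separately. Throughout, I use Proposition~\ref{negation-negative}: a family of negation sets of $\Sigma$ corresponds to a family of switchings $\Sigma^{S_0},\Sigma^{S_1},\dots$ (with $S_0=\emptyset$, so $B=E^-(\Sigma)$ is in the family), and the family is pairwise disjoint exactly when the negative edge sets $E^-(\Sigma^{S_i})$ are pairwise disjoint. Since $B$ is bipartite, Theorem~\ref{disjoint-negation-sets-are-bipartite} guarantees $\pi(B)\ge 2$, and indeed $B$ itself being bipartite means $\Sigma{:}E^-$ is a bipartite graph, so $\mathcal D\ne\emptyset$; because $\Sigma{:}E^-$ need not be connected, $\mathcal D$ may have many elements, which is exactly why the maximum appears. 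Note also that $d^+(X,Y)$ is finite: since $\Sigma$ is connected and $B$ is a proper subset of $E(\Sigma)$ (as $\Sigma$ is unbalanced but $B$ bipartite would otherwise... actually we just need $X,Y$ both nonempty, which holds since $\Sigma$ is unbalanced so $E^-\ne\emptyset$), the positive subgraph $\Sigma^+$ connects every component of $\Sigma{:}E^-$ to every other, so there is a finite positive path between any chosen $X$ and $Y$.

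For the lower bound $\pi(B)\ge \max d^+(X,Y)+1$: fix a stable bipartition $\{X,Y\}$ of $\Sigma{:}E^-$ achieving $k := d^+(X,Y)$. I want to produce $k+1$ pairwise-disjoint negation sets including $B$. The idea is to ``peel off'' positive BFS-shells from $X$ inside $\Sigma^+$. Let $L_j$ ($j=0,1,\dots$) be the set of vertices at positive distance exactly $j$ from $X$; so $L_0 = X$. For $i=0,1,\dots,k$ set $S_i := L_0\cup L_1\cup\dots\cup L_{i-1}$ (so $S_0=\emptyset$ gives $B$). I claim $E^-(\Sigma^{S_i})$ for $0\le i\le k$ are pairwise disjoint. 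A negative edge of $\Sigma^{S_i}$ is either (a) an original negative edge $e\in B$ not cut by $S_i$, or (b) an original positive edge cut by $S_i$, i.e.\ joining $L_{i-1}$ to $L_i$ (positive edges never join non-consecutive shells). For type (b): a positive edge between $L_{i-1}$ and $L_i$ is negative only in $\Sigma^{S_i}$ among all these switchings (it is cut by $S_i$ but by no $S_{i'}$ with $i'\ne i$, since $S_{i'}$ either contains both or neither of its ends), so type-(b) edges are automatically disjoint across the family and disjoint from $B$. For type (a): every $e\in B$ has one end in $X\subseteq S_i$ for every $i\ge 1$; its other end lies in some component of $\Sigma{:}E^-$, hence in either $X$ or $Y$ — if in $X$, the edge is inside $X$, contradicting stability, so the other end is in $Y$, and $Y$ has positive distance $\ge k$ from $X$, so that end lies in some $L_j$ with $j\ge k\ge i$, meaning it is \emph{not} in $S_i$ for $i\le k$; hence $e$ is cut by $S_i$ for every $1\le i\le k$, so $e$ stays negative only in $\Sigma^{S_0}=\Sigma$ and nowhere else in the family. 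Thus all negation sets in the family are disjoint, giving $\pi(B)\ge k+1$.

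For the upper bound $\pi(B)\le \max d^+(X,Y)+1$: suppose $\Sigma^{S_0}=\Sigma,\Sigma^{S_1},\dots,\Sigma^{S_m}$ are $m+1$ pairwise-disjoint negation sets; I must exhibit a stable bipartition $\{X,Y\}$ of $\Sigma{:}E^-$ with $d^+(X,Y)\ge m$. First, each negative-edge set $E^-(\Sigma^{S_i})$ is itself bipartite (Theorem~\ref{disjoint-negation-sets-are-bipartite}), and since they are disjoint and each meets every negative circle of $\Sigma$, a counting/parity argument constrains how the $S_i$ can cut $\Sigma{:}E^-$: within each connected component $K$ of $\Sigma{:}E^-$, with its unique stable bipartition $\{X_K,Y_K\}$, an edge of $K$ is negative in $\Sigma^{S_i}$ iff $S_i$ cuts it; since $E^-(\Sigma^{S_i})\cap E(K)$ must again be bipartite and is disjoint from $E(K)\subseteq B$ for $i\ge 1$, switching $S_i$ must make \emph{all} of $E(K)$ positive, i.e.\ $S_i\cap V(K)\in\{X_K,Y_K\}$. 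So each $S_i$ restricted to each component $K$ is a ``side'' of $K$; define $X := \bigcup_K (\text{the side of }K\text{ not containing }S_1\cap V(K))$... rather, the cleanest route: consider the positive edges that get switched. Two distinct switchings $S_i,S_j$ in the family: any vertex-pair connected by a short positive path forces overlap. Concretely, pick any component $K_0$ of $\Sigma{:}E^-$ and let $\{X,Y\}$ be its stable bipartition extended to a stable bipartition of all of $\Sigma{:}E^-$ by WLOG putting, in every other component $K$, the side equal to $S_1\cap V(K)$ into $Y$ (and the opposite side into $X$) — no wait, I should choose the extension to maximize $d^+(X,Y)$ and argue it is $\ge m$. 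The key mechanism: if $d^+(X,Y)\le m-1$ for \emph{every} stable bipartition, take a shortest positive path $P$ realizing the minimum; its successive vertices are switched by at most $m-1$ ``boundaries'', but the $m$ switchings $S_1,\dots,S_m$ together with $S_0=\emptyset$ must place the two ends of $P$ on opposite sides in ``too many'' of the $\Sigma^{S_i}$, forcing two of the $E^-(\Sigma^{S_i})$ to share a positive edge of $P$ — the desired contradiction.

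The main obstacle, and the part I expect to require the most care, is the upper bound — specifically pinning down, for an arbitrary disjoint family, the correct choice of stable bipartition $\{X,Y\}$ and showing $d^+(X,Y)\ge m$ from the disjointness. The heart of it is the observation that each $S_i$ must cut every component of $\Sigma{:}E^-$ along (one side of) its unique stable bipartition — this comes from disjointness forcing each $S_i$ ($i\ge1$) to entirely positivize $B$, which is Proposition~\ref{minimal-check}-style reasoning combined with Theorem~\ref{disjoint-negation-sets-are-bipartite} — and then an extremal argument on a shortest positive path between the two sides of a well-chosen bipartition, using that the $m+1$ switchings induce $m+1$ distinct ``colorings'' of that path's edges, two of which must collide on a positive edge once the path is shorter than $m$. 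Making this pigeonhole precise (especially choosing the bipartition so that $S_1,\dots,S_m$ act as genuinely ``independent shells'' along the path) is where the real work lies; the lower bound construction above is essentially complete as sketched.
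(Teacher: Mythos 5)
Your lower bound is, in substance, the paper's own construction: you switch by the positive-distance level sets $S_i=\{v \mid d^+(X,v)\le i-1\}$ from one class of a maximizing stable bipartition, and your disjointness argument (a cut positive edge lies between consecutive shells and is cut by exactly one $S_i$; every edge of $B$ has one end in $X$ and the other in $Y$ at positive distance at least $k$, so it is cut by every $S_i$ with $1\le i\le k$) is complete and matches the paper. One small repair there: the finiteness of $d^+(X,Y)$ does not follow from connectivity of $\Sigma$, as you suggest (for a balanced graph whose negative edges form a cut there is no positive path at all between the classes); it follows from unbalancedness, since if no positive path joined $X$ to $Y$ then switching the set of vertices positively reachable from $X$ would make every edge positive.

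The genuine gap is the upper bound, and you acknowledge it yourself. You correctly isolate the key structural fact that a negation set disjoint from $B$ arises from a switching set containing exactly one full side of each component of $\Sigma{:}E^-$, but you never settle on a bipartition or a path, and the closing ``pigeonhole on at most $m-1$ boundaries'' is not an argument: the various $S_i$ may take the $B_1$-side in some components and the $B_2$-side in others, so it is not established that two of the sets $E^-(\Sigma^{S_i})$ must collide on an edge of your path. The paper's upper bound avoids any pigeonhole over the family: fix a stable bipartition $\{B_1,B_2\}$ attaining the maximum $k$ and a single fully positive path $P=v_0v_1\cdots v_k$ realizing $d^+(B_1,B_2)=k$, and show that \emph{every} negation set $C=E^-(\Sigma^S)$ disjoint from $B$ contains an edge of $P$: since $S$ must contain one full side of each component of $\Sigma{:}E^-$, the paper takes $B_1\subseteq S$ and $S\cap B_2=\emptyset$ (this is where the maximal choice of the bipartition is invoked against precisely the mixed-sides possibility you were circling), and then if no edge of $P$ were in $C$, starting from $v_0\in B_1\subseteq S$ every $v_i$ would have to lie in $S$, forcing $v_k\in S\cap B_2$, a contradiction. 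Since the members of the family other than $B$ are pairwise disjoint and each must use one of the $k$ edges of $P$, there are at most $k$ of them, so $\pi(B)\le k+1$. Your instinct that the delicate point is how the switchings distribute over the components of $\Sigma{:}E^-$ relative to a chosen bipartition is exactly right, but your proposal does not carry that step out, so the upper bound remains unproved as written.
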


\begin{proof}
Let $k := \max_{\lbrace X, Y \rbrace \in \mathcal{D}} d^+(X, Y)$. I begin by showing that $k+1$ is an upper bound of $\pi(B)$, and then I construct a family of size $k+1$. Let $\lbrace B_1, B_2 \rbrace$ be a stable bipartition of $\Sigma{:}E^-$ with $d^+(B_1, B_2) = k$. Let $P = v_0v_1v_2\cdots v_k$ be a fully positive path of length $k$ from $B_1$ to $B_2$. 

I claim that any negation set disjoint from $B$ must contain an edge of $P$. Suppose to the contrary that there exists a negation set $C$ disjoint from $B$ with no edge of $P$. Suppose $C = E^-(\Sigma^S)$ for some $S \subseteq V(\Sigma)$. To obtain a negative edge set disjoint from $B$, I must, in each component of $\Sigma{:}E^-$, switch all the vertices from either $B_1$ or $B_2$. By our original choice of the bipartition $\lbrace B_1, B_2 \rbrace$, I may assume that $B_1 \subseteq S$. I cannot switch any vertices of $B_2$ or else $C$ will not be disjoint from $B$. That is, $S \cap B_2 =\emptyset$. Since $B_1 \subseteq S$, then $v_0 \in S$. If $v_1 \notin S$, then the edge $v_0v_1 \in E(P)$ will be negative in $\Sigma^S$ and thus it will be an edge of $C$. Since this is not the case, then it must be that $v_1 \in S$. Similarly, if any $v_i \in S$ then $v_{i+1} \in S$, otherwise $C$ would contain an edge of $P$. However, this forces $v_k \in S \cap B_2$. But, this is impossible. So no such $C$ exists. Thus every negation set disjoint from $B$ must contain an edge of $P$. Therefore, $\pi(B) \leq k+1$.

To see this upper bound is always attainable, define $S_i = \lbrace v\in V(\Sigma) \mid d^+(B_1, v) \leq i\rbrace$. Define $N_i$ to be the negative edge set of $\Sigma^{S_i}$. I claim $\lbrace N_i: 0 \leq i \leq k-1 \rbrace \cup \lbrace B \rbrace$ forms a set of $k+1$ disjoint negation sets of $\Sigma$. Since each $N_i$ is the negative edge set of a switching of $\Sigma$, they are negation sets. Since $\Sigma^{S_i}$ has all of its negative edges in the cut $c(S_i, S_{i+1})$, and $c(S_i, S_{i+1}) \cap c(S_j, S_{j+1}) = \emptyset$ for $0 \leq i  < j \leq k-1$, then all of the $N_i$ are pairwise disjoint. Furthermore, since each $S_i$ contains $B_1$ and no vertex of $B_2$, then each $N_i$ is disjoint from $B$. So $\lbrace N_i: 0 \leq i \leq k-1 \rbrace \cup \lbrace B \rbrace$ is a family of $k+1$ disjoint negation sets. 
\end{proof}

Theorem \ref{packing-1} requires $\Sigma$ to be unbalanced because for a balanced graph $\max_{\lbrace X, Y \rbrace \in \mathcal{D}} d^+(X, Y)$ is either undefined (if $E^-(\Sigma) = \emptyset$ and therefore $\mathcal{D} = \emptyset$) or infinite (if $E^-(\Sigma)$ is a cut and therefore there are no positive paths from $X$ to $Y$). Moreover, the problem of packing negation sets in a balanced graph becomes equivalent to packing cuts, which is known to be a hard problem.

Though theorem \ref{packing-1} gives us a succinct expression for $\pi(B)$, if $B$ has $m$ components, it demands that we check $2^m$ positive distances. This can quickly get out of hand. Theorem $\ref{packing-algorithm}$ improves this by providing a fast algorithm. The bulk of the work in the algorithm is calculating the distances between all the vertices in the positive subgraph and checking some signed graphs for balance. 

Let $\Sigma$ be a signed graph such that $\Sigma{:}E^-$ is bipartite with components $C_1, \ldots, C_m$. For each $C_i$, let $\lbrace V_{i1}, V_{i2} \rbrace$ be its unique stable bipartition. Let $\mathcal{D}$ be the collection of all stable bipartitions of $\Sigma{:}E^-$. Let $\tau$ be the permutation $(1 2)$. Since $\lbrace V_{i\alpha}, V_{i\tau(\alpha)} \rbrace$ is a stable bipartition of $C_i$, a connected component of $\Sigma{:}E^-$, there are negative edges between $V_{i\alpha}$ and $V_{i\tau(\alpha)}$. Therefore, every stable bipartition of $\Sigma{:}E^-$ will have $V_{i\alpha}$ and $V_{i\tau(\alpha)}$ in opposite classes. 

Let $$E = \lbrace (V_{i\alpha}, V_{j\beta}) \mid i, j \in [m], \alpha, \beta \in [2], i\alpha \neq j\beta \rbrace$$ be a set of edges. Let $w_1 < w_2 < \cdots < w_l$ be the distinct real (i.e., exclude infinity) elements of $\lbrace d^+(V_{i\alpha}, V_{j\beta}) \mid i, j \in [m], \alpha, \beta \in [2], i\alpha \neq j\beta \rbrace $. Let $w_0 = 0$. For each $k \in [l]$ define: 
$$E_k' := \lbrace (V_{i\alpha}, V_{j\beta}) \in E \mid d^+(V_{i\alpha}, V_{j\beta}) \leq w_k \rbrace,$$
$$E_k'' := \lbrace (V_{i\tau(\alpha)}, V_{j\tau(\beta)}) \in E \mid d^+(V_{i\alpha}, V_{j\beta}) \leq w_k \rbrace,$$
$$E_k := E_k' \cup E_k''.$$

Define signed graphs $\Phi_1, \ldots, \Phi_l$ where $V(\Phi_k) = \lbrace V_{i\alpha} \mid i \in [m], \alpha \in [2] \rbrace$, $E^-(\Phi_k) = \lbrace (V_{i1}, V_{i2}) \mid i \in [m] \rbrace$, and  $E^+(\Phi_k) = E_k$. All of the signed graphs in $\lbrace \Phi_k \rbrace_{k=1}^l$ have the same vertex set and negative edge set. For $i < j$, $E^+(\Phi_i) \subseteq E^+(\Phi_j)$. 

Let $\mu \in \mathbb{Z}$ such that $w_\mu = \min \lbrace d^+(V_{i1}, V_{i2}) \mid i \in [m] \rbrace$. Then $\Phi_{\mu}$ has a negative digon. So not all of the signed graphs in $\lbrace \Phi_k \rbrace_{k=1}^l$ are balanced. Therefore, there exists a smallest integer $p$ such that the signed graphs in $\lbrace \Phi_{k} \rbrace_{k=1}^{p-1}$ are balanced and the signed graphs in $\lbrace \Phi_{k} \rbrace_{k=p}^l$ are unbalanced.

\begin{lemma}\label{lem-Bipart-Phi} Let $\Phi_0$ be a signed graph with vertex set $\lbrace V_{i1}, V_{i2} \rbrace_{i=1}^m$, and negative edge set $E^-(\Phi_0) = \lbrace (V_{i1}, V_{i2}) \rbrace_{i=1}^m$. Let $S$ be a set of positive edges on $V(\Phi_0)$. Let $\Phi:= \Phi_0 \cup S$. Then $\Phi$ is balanced if and only if there exists a stable bipartition $\lbrace B_1, B_2 \rbrace$ of $\Sigma{:} E^-$ such that for all $V_{i\alpha}, V_{j\beta}$ in $V(\Phi)$:
\begin{enumerate}
	\item\label{bi-con-1} $V_{i\alpha}$ and $V_{j\beta}$ are in the same class of $\lbrace B_1, B_2 \rbrace$ if there exists a positive path from $V_{i\alpha}$ to $V_{j\beta}$ in $\Phi$, 
	\item \label{bi-con-2} $V_{i\alpha}$ and $V_{j\beta}$ are in opposite classes of $\lbrace B_1, B_2 \rbrace$ if there exists a negative path from $V_{i\alpha}$ to $V_{j\beta}$ in $\Phi$.
\end{enumerate} 
\end{lemma}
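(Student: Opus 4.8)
My plan is to route the whole equivalence through Harary's theorem (\cite{Harary}): $\Phi$ is balanced if and only if it has a Harary bipartition of $V(\Phi)=\lbrace V_{i\alpha}\rbrace$, that is, a $2$-coloring of the ``super-vertices'' in which every positive edge is monochromatic and every negative edge is bichromatic. The first point I would make is that such a $2$-coloring is the same thing as a stable bipartition of $\Sigma{:}E^-$: since $C_i$ is a connected component of $\Sigma{:}E^-$ it has a unique stable bipartition, namely $\lbrace V_{i1},V_{i2}\rbrace$, so any stable bipartition $\lbrace B_1,B_2\rbrace$ of $\Sigma{:}E^-$ restricts on $C_i$ to $\lbrace V_{i1},V_{i2}\rbrace$ and therefore places each $V_{i\alpha}$ entirely inside one class, with $V_{i1}$ and $V_{i2}$ in opposite classes; conversely, assigning each $V_{i\alpha}$ a class (with $V_{i1},V_{i2}$ opposite) and taking unions reassembles a stable bipartition of $\Sigma{:}E^-$. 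The negative digons $(V_{i1},V_{i2})$ of $\Phi$ are exactly what enforces the ``$V_{i1},V_{i2}$ opposite'' constraint.

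For the forward implication I would assume $\Phi$ balanced, fix a Harary bipartition $\lbrace A_1,A_2\rbrace$ of $V(\Phi)$, note that $V_{i1}$ and $V_{i2}$ sit in opposite $A_t$ (each digon is negative), and define $B_t:=\bigcup_{V_{i\alpha}\in A_t}V_{i\alpha}\subseteq V(\Sigma)$. I then have to check that $\lbrace B_1,B_2\rbrace$ is a stable bipartition of $\Sigma{:}E^-$: the $V_{i\alpha}$ partition $V(\Sigma{:}E^-)$ so it is a bipartition (both classes nonempty as soon as $m\ge 1$), and every negative edge of $\Sigma$ lies in some $C_i$, hence joins $V_{i1}$ to $V_{i2}$, hence joins $B_1$ to $B_2$ — so no negative edge is monochromatic. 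Conditions \ref{bi-con-1} and \ref{bi-con-2} then follow from the elementary fact that in a balanced signed graph a path from $u$ to $v$ is positive exactly when $u$ and $v$ lie in the same class of the Harary bipartition (telescoping the product of edge signs): a positive path between $V_{i\alpha}$ and $V_{j\beta}$ in $\Phi$ puts them in the same $A_t$, hence the same $B_t$, and a negative path puts them in opposite ones.

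For the converse I would start from a stable bipartition $\lbrace B_1,B_2\rbrace$ of $\Sigma{:}E^-$ satisfying \ref{bi-con-1} and \ref{bi-con-2}; restricting to each connected $C_i$ gives its unique stable bipartition $\lbrace V_{i1},V_{i2}\rbrace$, so every $V_{i\alpha}$ lies in exactly one of $B_1,B_2$, which defines a $2$-coloring $g$ of $V(\Phi)$ with $g(V_{i1})\ne g(V_{i2})$. Applying condition \ref{bi-con-1} to the length-one positive paths (the positive edges of $\Phi$) makes every positive edge monochromatic under $g$, and the negative edges of $\Phi$ are precisely the digons $(V_{i1},V_{i2})$, which are bichromatic because $g(V_{i1})\ne g(V_{i2})$. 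Thus $g$ is a Harary bipartition of $\Phi$, so $\Phi$ is balanced by \cite{Harary}.

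I expect the only place that requires genuine care is the forward direction's verification that coalescing the super-vertex classes $A_1,A_2$ into vertex sets $B_1,B_2$ really yields a \emph{stable} bipartition of the full graph $\Sigma{:}E^-$; this is where the global structure enters, since one must invoke that every negative edge of $\Sigma$ is confined to a single component $C_i$ and there runs between the two sides of that component's unique stable bipartition, which is exactly what prevents a monochromatic negative edge. The rest is bookkeeping — for instance, if $S$ happens to contain a positive edge parallel to a negative digon, both sides of the equivalence simply fail (condition \ref{bi-con-1} would clash with $g(V_{i1})\ne g(V_{i2})$, while $\Phi$ then contains a negative digon), so no special case is needed.
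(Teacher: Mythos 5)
Your proof is correct, and it reaches the equivalence by a somewhat different route than the paper. You invoke the bipartition form of Harary's theorem for $\Phi$ itself: in the forward direction you take a Harary bipartition $\lbrace A_1, A_2 \rbrace$ of the super-vertices, coalesce it into $\lbrace B_1, B_2 \rbrace$, and get conditions \ref{bi-con-1} and \ref{bi-con-2} from the telescoping fact that a path in a balanced graph is positive exactly when its ends lie on the same side of the cut; in the converse you build a Harary bipartition of $\Phi$ directly from the stable bipartition, using condition \ref{bi-con-1} only on length-one paths and getting the bichromatic digons for free from the uniqueness of each component's stable bipartition. The paper instead works with the path-sign criterion throughout: its forward direction fixes a base vertex $X$, sorts super-vertices into $P$ and $N$ by the sign of paths to $X$ (after reducing to connected $\Phi$), and verifies the two conditions by concatenating paths through $X$ and checking sign cases; its converse is a proof by contradiction, extracting from unbalance a pair of super-vertices joined by both a positive and a negative path. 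Your version is shorter and your converse is constructive and mildly stronger (the edge-level instances of the conditions already suffice), while the paper's base-vertex construction essentially re-derives the bipartition form of Harary's theorem inside the proof and keeps the path-based phrasing that is reused later in the proof of theorem \ref{packing-algorithm}. The one point needing care in your argument --- that coalescing super-vertex classes yields a \emph{stable} bipartition of $\Sigma{:}E^-$, because every negative edge lies in some component $C_i$ and joins $V_{i1}$ to $V_{i2}$, which the negative digons force onto opposite sides --- you identify and handle correctly.
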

	
\begin{proof}
Let $\phi$ be the sign function of $\Phi$. It suffices to consider the case where $\Phi$ is connected. 

Suppose $\Phi$ is balanced. Then $\Phi$ satisfies Harary's path condition for balance: for any two vertices in $\Phi$ every path connecting them has the same sign. I will construct a bipartition of $V(\Sigma{:} E^-)$ satisfying conditions \ref{bi-con-1} and \ref{bi-con-2}. Pick any vertex $X \in \Phi$. Let $P$ be the set of vertices in $\Phi$ that are connected to $X$ by a positive path in $\Phi$ and $X$ itself. Let $N$ be the set of vertices in $\Phi$ that are connected to $X$ by a negative path in $\Phi$. Because of Harary's path condition, $P \cap N = \emptyset.$ Since I assumed $\Phi$ is connected, $P \cup N = V(\Phi)$. Thus $\lbrace P, N \rbrace$ is a bipartition of $V(\Phi)$. Let $B_1 = \bigcup_{V \in P} V$ and $B_2 = \bigcup_{V \in N} V$. 

I claim that $\lbrace B_1, B_2 \rbrace$ is a stable bipartition of $\Sigma{:} E^-$. The set $V(\Phi) = \lbrace V_{i\alpha} \mid i \in [m], \alpha \in [2] \rbrace$ is a partition of $V(\Sigma{:} E^-)$. Since $\lbrace P, N \rbrace$ is a bipartition of $V(\Phi)$, then $\lbrace B_1, B_2 \rbrace$ is just a coursening of the partition $V(\Phi)$ of $V(\Sigma{:} E^-)$. So $\lbrace B_1, B_2 \rbrace$ is a bipartition of $V(\Sigma{:}E^-)$. In particular, this means $B_1 \cap B_2 = \emptyset$.

To see that the bipartition is stable, consider two vertices $x, y \in B_1$ and suppose by way of contradiction that $x \sim y$ in $\Sigma{:}E^-$. Then $x$ and $y$ are in the same component of $\Sigma{:}E^-$. In particular, since they are adjacent in $\Sigma{:}E^-$, they must be in opposite classes of the stable bipartition of that component. That is, there must exist some $V_{i1}$ and $V_{i2}$ in $V(\Phi)$ with $x \in V_{i1}$ and $y \in V_{i2}$. Without loss of generality, suppose $V_{i1} \in P$. So there exists a positive path from $X$ to $V_{i1}$ in $\Phi$. In $\Phi$, $V_{i1}$ is negatively adjacent to $V_{i2}$ so there exists a negative path from $X$ to $V_{i2}$ in $\Phi$. So $V_{i2} \in N$. Thus $V_{i2} \subseteq B_2$ and hence $y \in B_2$. But this is a contradiction because I assumed $y \in B_1$ and have already established that $B_1 \cap B_2 = \emptyset$. Therefore $\lbrace B_1, B_2 \rbrace$ is a stable bipartition of $V(\Phi)$. 

I will now show that $\lbrace B_1, B_2 \rbrace$ satisfies conditions \ref{bi-con-1} and \ref{bi-con-2}. By the construction of $\lbrace B_1, B_2 \rbrace$, $V_{i\alpha}$ is in the same class as $X$ if they are joined by a positive path and they are in opposite classes if they are joined by a negative path. Suppose $V_{i\alpha}$ and $V_{j\beta}$ are connected by a positive path in $\Phi$. Let $P_1$ be a path in $\Phi$ from $V_{i\alpha}$ to $X$ and $P_2$ be a path in $\Phi$ from $X$ to $V_{j\beta}$. Then the concatenation of these paths, $P_1P_2$, is a path from $V_{i\alpha}$ to $V_{j\beta}$.

Because of Harary's path condition for balance, $\phi(P_1P_2) = +$. Since $\phi(P_1P_2) = \phi(P_1)\phi(P_2)$, then either $\phi(P_1)$ and $\phi(P_2)$ are both positive or both negative. If $\phi(P_1)$ and $\phi(P_2)$ are both positive then $V_{i\alpha}$ is connected to $X$ by a positive path. Therefore, $V_{i\alpha}$ and $X$ are in the same class of $\lbrace B_1, B_2 \rbrace$. Similarly, $V_{j\beta}$ and $X$ are connected by a positive path. Therefore, they are in the same class of $\lbrace B_1, B_2 \rbrace$. Since $V_{i\alpha}$ and $V_{j\beta}$ are both in the same class as $X$, then $V_{i\alpha}$ and $V_{j\beta}$ are in the same class. If $\phi(P_1)$ and $\phi(P_2)$ are both negative, then both $V_{i\alpha}$ and $V_{j\beta}$ are in the class not containing $X$, i.e.,  $V_{i\alpha}$ and $V_{j\beta}$ are in the same class. This shows that condition \ref{bi-con-1} holds. 

Showing that condition \ref{bi-con-2} holds is similar. Suppose $V_{i\alpha}$ and $V_{j\beta}$ are joined by a negative path in $\Phi$. Let $P_1$ be a path from $V_{i\alpha}$ to $X$ and $P_2$ be a path from $X$ to $V_{j\beta}$. Then the concatenation of these paths, $P_1P_2$, is a path from $V_{i\alpha}$ to $V_{j\beta}$. 

Because of Harary's path condition for balance, $\phi(P_1P_2) = -$. Since $\phi(P_1P_2) = \phi(P_1)\phi(P_2)$, then $\phi(P_1)$ and $\phi(P_2)$ have opposite signs. Suppose that $\phi(P_1) = +$ and $\phi(P_2) = -$. Then $V_{i\alpha}$ is connected to $X$ by a positive path and therefore they are in the same class of $\lbrace B_1, B_2 \rbrace$. Similarly, $V_{j\beta}$ and $X$ are connected by a negative path and therefore they are in opposite classes of $\lbrace B_1, B_2 \rbrace$. Thus $V_{i\alpha}$ and $V_{j\beta}$ are in opposite classes of $\lbrace B_1, B_2 \rbrace$. So condition \ref{bi-con-2} holds.

Conversely, if $\Phi$ is unbalanced, I will show that there cannot be a stable bipartition satisfying \ref{bi-con-1} and \ref{bi-con-2}. By way of contradiction, suppose there exists such a stable bipartition. Since $\Phi$ is unbalanced, there exists $V_{i\alpha}$ and $V_{j\beta}$ connected by both a positive and negative path in $\Phi$. Since $V_{i\alpha}$ and $V_{j\beta}$ are connected by a positive path, then $V_{i\alpha}$ and $V_{j\beta}$ are in the same class of the stable bipartition of $\Sigma{:}E^-$. Since $V_{i\alpha}$ and $V_{j\beta}$ are connected by a negative path, then they are in opposite classes of the stable bipartition of $\Sigma{:}E^-$. This is a contradiction. So if $\Phi$ is unbalanced, there is no stable bipartition of $\Sigma{:}E^-$ satisfying \ref{bi-con-1} and \ref{bi-con-2}.
\end{proof}

\begin{thm}\label{packing-algorithm}
Let $p$ be the smallest integer such that $\Phi_p$ is unbalanced. Then $\max_{\lbrace X, Y \rbrace \in \mathcal{D}} d^+(X, Y) = w_p$.
\end{thm}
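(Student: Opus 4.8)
The plan is to bridge between Theorem~\ref{packing-algorithm} and Lemma~\ref{lem-Bipart-Phi} with the following dictionary: \emph{for each $k\in[l]$, the signed graph $\Phi_k$ is balanced if and only if some stable bipartition $\{X,Y\}\in\mathcal{D}$ satisfies $d^+(X,Y)>w_k$.} Granting the dictionary, the theorem is quick. Since each class of a stable bipartition $\{X,Y\}$ of $\Sigma{:}E^-$ is a union of certain sets $V_{i\alpha}$ (because each $C_i$ is connected, its bipartition $\{V_{i1},V_{i2}\}$ is unique, so $\{X,Y\}$ restricts to it on $C_i$), we have $d^+(X,Y)=\min\{d^+(V_{i\alpha},V_{j\beta}):V_{i\alpha}\subseteq X,\ V_{j\beta}\subseteq Y\}$, and in particular, by the definition of the $w_k$, $d^+(X,Y)\in\{w_1,\ldots,w_l\}\cup\{\infty\}$. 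Because $\Phi_p$ is unbalanced, the dictionary yields $d^+(X,Y)\leq w_p$ for every $\{X,Y\}\in\mathcal{D}$, so $\max_{\{X,Y\}\in\mathcal{D}}d^+(X,Y)\leq w_p$. For the reverse inequality, if $p=1$ it is immediate because every $d^+(X,Y)$ is at least $w_1$; if $p\geq2$ then $\Phi_{p-1}$ is balanced, so some $\{X,Y\}\in\mathcal{D}$ has $d^+(X,Y)>w_{p-1}$, and since $d^+(X,Y)\in\{w_1,\ldots,w_l\}\cup\{\infty\}$ and $w_p$ is the least element of that set exceeding $w_{p-1}$, we get $d^+(X,Y)\geq w_p$. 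The two bounds give the equality.

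To prove the dictionary I would apply Lemma~\ref{lem-Bipart-Phi} to $\Phi_k=\Phi_0\cup E_k$ and simplify its two conditions. The negative edges of $\Phi_k$ are the digons $(V_{i1},V_{i2})$, and every stable bipartition of $\Sigma{:}E^-$ already places $V_{i1}$ and $V_{i2}$ in opposite classes (by the uniqueness remark above); so condition~(\ref{bi-con-2}) is automatic. A parity argument---a positive, resp.\ negative, path in $\Phi_k$ has an even, resp.\ odd, number of negative edges, hence preserves, resp.\ reverses, the class---then shows that conditions~(\ref{bi-con-1})--(\ref{bi-con-2}) for a stable bipartition $\{X,Y\}$ are equivalent to the single edge condition: every positive edge of $\Phi_k$ joins two vertices in the same class of $\{X,Y\}$.

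Next I would convert the edge condition into a distance condition. By definition $(V_{i\alpha},V_{j\beta})\in E_k$ precisely when $d^+(V_{i\alpha},V_{j\beta})\leq w_k$ or $d^+(V_{i\tau(\alpha)},V_{j\tau(\beta)})\leq w_k$; and since $V_{i1},V_{i2}$ lie on opposite sides of $\{X,Y\}$, the sets $V_{i\alpha},V_{j\beta}$ are in the same class exactly when $V_{i\tau(\alpha)},V_{j\tau(\beta)}$ are, so the $E_k''$ part merely repeats the constraints of the $E_k'$ part. Hence the edge condition says: whenever $i\alpha\neq j\beta$ and $d^+(V_{i\alpha},V_{j\beta})\leq w_k$, the sets $V_{i\alpha}$ and $V_{j\beta}$ lie in the same class of $\{X,Y\}$. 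Finally, I would verify this holds for $\{X,Y\}\in\mathcal{D}$ exactly when $d^+(X,Y)>w_k$: if it fails, then some such pair has $V_{i\alpha}\subseteq X$, $V_{j\beta}\subseteq Y$, and a positive path of length $d^+(V_{i\alpha},V_{j\beta})\leq w_k$ witnesses $d^+(X,Y)\leq w_k$; conversely, any $x\in X$, $y\in Y$ with $d^+(x,y)\leq w_k$ lie in sets $V_{i\alpha}\subseteq X$, $V_{j\beta}\subseteq Y$ with $i\alpha\neq j\beta$, which is exactly a pair violating the condition. This establishes the dictionary and finishes the proof.

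The step I expect to be the main obstacle is the conversion just described: one must juggle the digon negative edges of $\Phi_k$, the symmetrized definition $E_k=E_k'\cup E_k''$, and the antipodal role of $V_{i1}$ and $V_{i2}$ in a stable bipartition of $\Sigma{:}E^-$, keeping the bookkeeping of ``$V_{i\alpha},V_{j\beta}$ in the same class'' versus ``$V_{i\tau(\alpha)},V_{j\tau(\beta)}$ in the same class'' straight. Once the edge characterization of balance for $\Phi_k$ is pinned down, the reduction to $d^+(X,Y)>w_k$ and the final two-inequality argument are routine.
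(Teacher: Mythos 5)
Your proposal is correct and takes essentially the same route as the paper: your ``dictionary'' ($\Phi_k$ balanced iff some stable bipartition has $d^+(X,Y)>w_k$) is exactly the two halves of the paper's argument via Lemma~\ref{lem-Bipart-Phi} (balanced $\Phi_q$ yields a bipartition with $d^+>w_q$; a bipartition with $d^+>w_p$ would force $\Phi_p$ balanced, contradiction), combined with the same observation that attainable values of $d^+(X,Y)$ lie among the $w_k$. The only difference is packaging, including your explicit handling of the $p=1$ edge case.
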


\begin{proof}
Let $q \in [m]$. Suppose $\Phi_q$ is balanced. Then by lemma \ref{lem-Bipart-Phi}, there exists a stable bipartition  $\lbrace Q_1, Q_2 \rbrace$ of $\Sigma{:}E^-$ satisfying lemma \ref{lem-Bipart-Phi}(\ref{bi-con-1}) and \ref{lem-Bipart-Phi}(\ref{bi-con-2}). I will show that $d^+(Q_1, Q_2) > w_q$. By way of contradiction, suppose there exists $x \in V_{r\gamma} \subseteq Q_1$ and $y \in V_{s\theta} \subseteq Q_2$ such that $d^+(x, y) \leq w_q$. Then $d^+(V_{r\gamma}, V_{s\theta}) \leq w_q$. So $(V_{r\gamma}, V_{s\theta}) \in E^+(\Phi_q)$. So $V_{r\gamma}$ is positively adjacent to $V_{s\theta}$ in $\Phi_q$. Thus there is a positive path from $V_{r\gamma}$ to $V_{s\theta}$ in $\Phi_q$. So by lemma \ref{lem-Bipart-Phi},  $V_{r\gamma}$ and $V_{s\theta}$ are in the same class of $\lbrace Q_1, Q_2 \rbrace$. So $x$ and $y$ are in the same class of $\lbrace Q_1, Q_2 \rbrace$. This is a contradiction because I assumed that $x$ and $y$ were in different classes. Thus $d^+(Q_1, Q_2) > w_q$.  

Next I prove that $ d^+(Q_1, Q_2) \in \lbrace w_k \rbrace_{i=1}^l$. Suppose $x \in Q_1$ and $y \in Q_2$ such that $d^+(Q_1, Q_2) = d^+(x, y)$. For some $i\alpha$ and $j\beta$,  $x \in V_{i\alpha}$ and $y \in V_{j\beta}$. So $V_{i\alpha} \subseteq Q_1$ and $V_{j\beta} \subseteq Q_2$. Clearly, $d^+(x, y) \geq d^+(V_{i\alpha}, V_{j\beta})$. But if $ d^+(V_{i\alpha}, V_{j\beta}) < d^+(x, y)$ then $d^+(Q_1, Q_2) < d^+(x, y)$ because $d^+(Q_1, Q_2)$ is bound above by $d^+(V_{i\alpha}, V_{j\beta})$. This is a contradiction, so $d^+(x, y) = d^+(V_{i\alpha}, V_{j\beta}) = d^+(Q_1, Q_2)$. So $d^+(Q_1, Q_2) \in \lbrace w_k \rbrace_{k=1}^l$. 

Since $\Phi_{p-1}$ is balanced, there exists a stable bipartition $\lbrace B_1, B_2 \rbrace$ of $\Sigma{:}E^-$ such that $d^+(B_1, B_2) > w_{p-1}$. Thus $d^+(B_1, B_2) \geq w_p$.

Finally, I show that there is no stable bipartition of $\Sigma{:}E^-$ with distance between its classes larger than $w_p$. By way of contradiction, suppose there exists a stable bipartition $\lbrace B_3, B_4 \rbrace$ of $\Sigma{:}E^-$ with $d^+(B_3, B_4) > w_p$. Recall that any stable bipartition of $\Sigma{:}E^-$ has $V_{i1}$ and $V_{i2}$ in opposite classes. 

Since $d^+(B_3, B_4) > w_p$, any two vertices $V_{r\gamma}, V_{s\theta}$ with $d^+(V_{r\gamma}, V_{s\theta}) \leq w_p$ must be contained in the same class of $\lbrace B_3, B_4 \rbrace$. To see this, suppose to the contrary that $V_{r\gamma}$ and $V_{s\theta}$ are in opposite classes. Suppose $V_{r\gamma} \subseteq B_3$, and $V_{s\theta} \subseteq B_4$. Let $x \in V_{r\gamma}$ and $y \in V_{s\theta}$ having $d^+(x, y) = d^+(V_{r\gamma}, V_{s\theta})$. Then since $x \in B_3$ and $y \in B_4$, $d^+(B_3, B_4) \leq d^+(x, y) = d^+(V_{r\gamma}, V_{s\theta}) \leq w_p$. This is a contradiction because I assumed $d^+(B_3, B_4) > w_p$. So any $V_{r\gamma}$ and $V_{s\theta}$ with $d^+(V_{r\gamma}, V_{s\theta}) \leq w_p$ must be in the same class of $\lbrace B_3, B_4 \rbrace$. 

I will show that the structure of $\Phi_p$ is connected to the structure of $\lbrace B_3, B_4 \rbrace$. I will then use lemma \ref{lem-Bipart-Phi} to reach a contradiction and conclude the proof. The signed graph $\Phi_p$ has positive edge set \begin{align*}E^+(\Phi_p) &= \lbrace (V_{i\alpha}, V_{j\beta}) \mid d^+(V_{i\alpha}, V_{j\beta}) \leq w_p \rbrace \cup \lbrace (V_{i\tau(\alpha)}, V_{j\tau(\beta)} ) \mid d^+(V_{i\alpha}, V_{j\beta}) \leq w_p \rbrace\\ &= E_p' \cup E_p'' \end{align*} and negative edge set $$E^-(\Phi_p) = \lbrace (V_{i1}, V_{i2}) \mid i \in [m] \rbrace.$$

For $(V_{i\alpha}, V_{j\beta}) \in E^+(\Phi_p)$, either $(V_{i\alpha}, V_{j\beta}) \in E_p'$ or $(V_{i\alpha}, V_{j\beta}) \in E_p''$. If $(V_{i\alpha}, V_{j\beta}) \in E_p'$ then $d^+(V_{i\alpha}, V_{j\beta}) \leq w_p$. So $V_{i\alpha}, V_{j\beta}$ are in the same class of $\lbrace B_3, B_4 \rbrace$ because of our previous discussion. If $(V_{i\alpha}, V_{j\beta}) \in E_p''$ then $(V_{i\tau(\alpha)}, V_{j\tau(\beta)}) \in E_p'$ and hence $d^+(V_{i\tau(\alpha)}, V_{j\tau(\beta)}) \leq w_p$. So $V_{i\tau(\alpha)}$ and $V_{j\tau(\beta)}$ are in the same class of $\lbrace B_3, B_4 \rbrace$ and therefore $V_{i\alpha}$ and $V_{j\beta}$ are in the same class of $\lbrace B_3, B_4 \rbrace$.

For $(V_{i\alpha}, V_{j\beta}) \in E^-(\Phi_p)$, $(V_{i\alpha}, V_{j\beta}) = (V_{i1}, V_{i2})$ for some $i$. Since $V_{i1}$ and $V_{i2}$ must be in opposite classes of $\lbrace B_3, B_4 \rbrace$ then $V_{i\alpha}$ and $V_{j\beta}$ are in opposite classes of $\lbrace B_3, B_4 \rbrace$. 

In summary: if $(V_{i\alpha}, V_{j\beta}) \in E^+(\Phi_p)$, then $V_{i\alpha}$ and $V_{j\beta}$ are in the same class of $\lbrace B_3, B_4 \rbrace$; if $(V_{i\alpha}, V_{j\beta}) \in E^-(\Phi_p)$ then $V_{i\alpha}$ and $V_{j\beta}$ are in opposite classes of $\lbrace B_3, B_4 \rbrace.$ By extension, if $V_{i\alpha}$ and $V_{j\beta}$ are joined by a positive path in $\Phi_p$ then $V_{i\alpha}$ and $V_{j\beta}$ are in the same class of $\lbrace B_3, B_4 \rbrace$. Similarly, if $V_{i\alpha}$ and $V_{j\beta}$ are joined by a negative path in $\Phi_p$, then $V_{i\alpha}$ and $V_{j\beta}$ are in opposite classes of $\lbrace B_3, B_4 \rbrace.$ So $\lbrace B_3, B_4 \rbrace$ is a stable bipartition of $\Sigma{:}E^-$ satisfying 1 and 2 of lemma \ref{lem-Bipart-Phi} with corresponding signed graph $\Phi_p$. So by lemma \ref{lem-Bipart-Phi}, $\Phi_p$ should be balanced. But by hypothesis, $\Phi_p$ is unbalanced. So no stable bipartition $\lbrace B_3, B_4 \rbrace$ of $\Sigma{:}E^-$ with $d^+(B_3, B_4) > w_p$ exists. So $\max_{ \lbrace X, Y \rbrace \in \mathcal{D}} d^+(X, Y) = w_p$. 
\end{proof}

\bibliographystyle{amsplain}
\bibliography{lacasse-negation-sets-bib}

\providecommand{\bysame}{\leavevmode\hbox to3em{\hrulefill}\thinspace}
\providecommand{\MR}{\relax\ifhmode\unskip\space\fi MR }
\providecommand{\MRhref}[2]{%
  \href{http://www.ams.org/mathscinet-getitem?mr=#1}{#2}
}
\providecommand{\href}[2]{#2}
\begin{thebibliography}{1}

\bibitem{4-2-thm-improvement}
Richard~C. Brewster and Mark Siggers, \emph{A complexity dichotomy for signed
  h-colouring}, Discrete Mathematics \textbf{341} (2018), no.~10, 2768 -- 2773.

\bibitem{Hage-Harju-Acyclic}
Jurriaan Hage and Tero Harju, \emph{A characterization of acyclic switching
  classes of graphs using forbidden subgraphs}, SIAM J. Discrete Math.
  \textbf{18} (2004), 159--176.

\bibitem{Hage-Harju-Bipartite}
Jurriaan Hage and Tero Harju, \emph{Towards a characterization of bipartite
  switching classes by means of forbidden subgraphs}, Discuss. Math. Graph
  Theory \textbf{27} (2007), 471--483.

\bibitem{Harary}
Frank Harary, \emph{On the notion of balance of a signed graph}, Michigan Math.
  J. \textbf{2} (1953-54), 143--146.

\bibitem{packing-homomorphism-connection}
Reza Naserasr and Weiqiang Yu, \emph{Packing signed graphs}, in preparation.

\bibitem{bipartite-complexity}
P.~Hell R.~C.~Brewster, F.~Foucaud and R.~Naserasr, \emph{The complexity of
  signed and edge-coloured graph homormophisms}, Discrete Mathematics
  \textbf{340} (2017), 223--235.

\end{thebibliography}

\end{document}